\documentclass[11pt]{amsart}
\textheight=615pt \textwidth=360pt

\usepackage{mathptmx}
\usepackage{amsmath,amssymb,tikz-cd}
\usepackage{quiver}
\usepackage{amsmath}
\usepackage{amssymb}
\usepackage{amsthm}
\usepackage{xspace}
\usepackage[all,tips]{xy}
\usepackage{syntonly}
\usepackage{hyperref}
\usepackage{arydshln}
\usepackage{todo}
\usepackage{enumerate}
\usepackage{tikz-cd}
\usepackage{tikz}
\usepackage{adjustbox}
\usepackage{comment}


\newtheorem{thm}{Theorem}[section]
\newtheorem{cor}[thm]{Corollary}
\newtheorem{prop}[thm]{Proposition}
\newtheorem{lemma}[thm]{Lemma}

\theoremstyle{definition}
\newtheorem{defn}[thm]{Definition}

\theoremstyle{remark}
\newtheorem{rem}{Remark}

\DeclareMathOperator{\Aut}{Aut} \DeclareMathOperator{\Out}{Out}
 
\DeclareMathOperator{\GL}{GL}

\DeclareMathOperator{\Inn}{Inn}
\DeclareMathOperator{\Fitt}{Fitt}\DeclareMathOperator{\Hom}{Hom}

\newcommand{\N}{\ensuremath{\mathbf{N}}}
\newcommand{\U}{\ensuremath{\mathbf{U}}}

\newcommand{\R}{\ensuremath{\mathbb{R}}}
\newcommand{\Q}{\ensuremath{\mathbb{Q}}}
\newcommand{\Z}{\ensuremath{\mathbb{Z}}}
\newcommand{\C}{\ensuremath{\mathbb{C}}}
\newcommand{\F}{\ensuremath{\mathbb{F}}}
\newcommand{\G}{\ensuremath{\mathbf{G}}}

\newcommand{\T}{\ensuremath{\mathbf{T}}}
\newcommand{\HH}{\ensuremath{\mathbf{H}}}


\title[Rational cohomology of Zariski dense subgroups]{Rational cohomology and Zariski dense subgroups of Solvable Linear Algebraic Groups}
\author{Milana Golich}
\address{Department of Mathematics\\Purdue University\\West Lafayette, IN 47907}
\email{mgolich@purdue.edu}

\author{Antonio L\'{o}pez Neumann}
\address{Université Paris Cité, Sorbonne Université, CNRS, IMJ-PRG, F-75013 Paris, France}
\email{lopezneumann@imj-prg.fr}

\author{Mark Pengitore}
\address{Department of Mathematics\\University of Virginia\\Charlottesville, VA 22903}
\email{waj9cr@virginia.edu}

\begin{document}

\begin{abstract}
We show that for an irreducible solvable $\Q$-defined linear algebraic group $\G$, there exists an isomorphism between the cohomology rings with coefficients in a finite dimensional rational $\G$-module $M$ of the associated $\Q$-defined Lie algebra $\mathfrak{g_\Q}$ and Zariski dense subgroups $\Gamma \leq \G(\Q)$ that satisfy the condition that they intersect the $\Q$-split maximal torus discretely. We further prove that the restriction map in rational cohomology from $\G$ to a Zariski dense subgroup $\Gamma \leq \G(\Q)$ with coefficients in $M$ is an injection. As an application, we show that if the $\Q$-algebraic hull of $\Gamma$ is irreducible and its Fitting subgroup is $S$-arithmetic, then the action of $\Out(\Gamma)$ on its cohomology with $\Q$-coefficients is $S$-arithmetic.
\end{abstract}

\maketitle


\section{Introduction}\label{Section: Intro}

Let $\G$ be a linear algebraic group defined over a field $k$.  A linear representation $\rho$ of $\G$ on a finite dimensional $k$-vector space $V$ is $\emph{rational}$ if $\rho: \G \to \GL(V)$ is a $k$-rational homomorphism. Equivalently, we refer to $V$ as a finite dimensional $\emph{rational}$ $\G$-\emph{module}. The \emph{rational cohomology groups} of $\G$ with coefficients in $V$ are denoted by $H^n(\G, V)$ (see Section \ref{Section: Cohom} for a definition).  By \cite{borel}, $\G$ has an associated $k$-defined Lie algebra which we denote by $\mathfrak{g}_k$. Since the $k$-rational homomorphism $\rho \colon \G \to \GL(V)$ naturally induces a Lie algebra homomorphism $d\rho: \mathfrak{g}_k \to \mathfrak{gl}(V)$ where $d$ is the differential, we have that $V$ is a $\mathfrak{g}_k$-module derived from a rational representation. The Lie algebra cohomology groups of $\mathfrak{g}_k$ with coefficients in $V$ are then denoted $H^n(\mathfrak{g}_k, V)$ (see Section \ref{Section: Cohom} for a definition). Lastly, given a subgroup $\Gamma \leq \G(k)$ and a rational $\G$-module $V$, we can see $V$ as a $\Gamma$-module just by restricting the action to $\G$. We can thus study the usual \emph{group cohomology} spaces of $\G$ with coefficients in $V$, denoted by $H^*(\Gamma, V)$. The goal of this article is to understand the relationship between these three different cohomology theories for the case of solvable linear algebraic groups. 

In \cite{Mostow}, Mostow proved that there exists an isomorphism between $H^*(\mathfrak{g}_\R, \R)$ and $H^*(\Gamma, \R)$ when  $G$ is a connected, simply connected real solvable Lie group and $\Gamma \leq G$ is a discrete cocompact subgroup such that $\textrm{Ad}_{{\mathfrak{g}_\R}}(G)$ and $\textrm{Ad}_{{\mathfrak{g}_\R}}(\Gamma)$ have the same algebraic hulls. Kunkel \cite{Kunkel} then provided an analogue of Mostow's result in the case of when $\G$ is an irreducible solvable $\Q$-defined linear algebraic group. Specifically, he showed that if $\Gamma$ is a discrete, cocompact arithmetic subgroup of the identity component $\G(\R)^0$ of the real points of $\G$, then the pair ($\G(\R)^0, \Gamma$) satisfies the conditions of Mostow's theorem, and hence, $H^*(\mathfrak{g}_\R, \R) \cong H^*(\Gamma, \R)$. Then, by a spectral sequence argument, Kunkel proved that $H^*(\mathfrak{g}_\Q, \Q) \cong H^*(\Gamma, \Q)$ as cohomology rings. 

Our main theorem furthers these results for the case of when $\G$ is an irreducible solvable $\Q$-defined linear algebraic group and $\Gamma \leq \G(\Q)$ is a Zariski dense subgroup such that its intersection with the $\Q$-split maximal torus is discrete in the manifold topology on $\G(\R)$. Given that $\G(\R)$ is a connected solvable Lie group, we refer to the Lie group topology on $\G(\R)$ as the Euclidean topology.

\begin{thm}\label{main_thm}
         Let $\G$ be an irreducible solvable $\Q$-defined linear algebraic group  with associated $\Q$-defined Lie algebra $\mathfrak{g}_{\Q}$, and let $\Gamma \leq \G(\Q)$ be a Zariski dense subgroup such that it intersects the $\Q$-split maximal torus discretely in the Euclidean topology. Then for every finite dimensional rational $\G$-module $M$, there exists an isomorphism $\Phi_M: H^*(\mathfrak{g}_{\Q}, M) \to H^*(\Gamma, M)$ of cohomology rings.
\end{thm}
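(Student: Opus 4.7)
The plan is to construct the comparison map $\Phi_M$ at the cochain level and to verify it is a quasi-isomorphism by comparing Hochschild--Serre spectral sequences arising from the decomposition of $\G$ into its unipotent radical and a maximal torus. By the Mostow structure theorem for solvable algebraic groups, write $\G = \T \ltimes \U$, where $\U$ is the unipotent radical and $\T$ is a $\Q$-defined maximal torus. On the Lie algebra side, $H^*(\mathfrak{g}_\Q, M)$ is computed by the Chevalley--Eilenberg complex; on the group side, $H^*(\Gamma, M)$ is computed by the bar complex. The exponential map furnishes a $\Q$-algebraic isomorphism $\mathfrak{u}_\Q \to \U$, and combining this with a cocycle-level treatment of the torus factor yields a natural chain map $\Phi_M$ compatible with wedge and cup products.

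Set $\Gamma_\U := \Gamma \cap \U(\Q)$ and $\Gamma_\T := \Gamma/\Gamma_\U$. Zariski density of $\Gamma$ in $\G$ combined with solvability of $\G$ forces $\Gamma_\U$ to be Zariski dense in $\U$ and $\Gamma_\T$ to embed as a Zariski-dense subgroup of $\T(\Q)$. The short exact sequences
\[
0 \to \mathfrak{u}_\Q \to \mathfrak{g}_\Q \to \mathfrak{t}_\Q \to 0 \quad \text{and} \quad 1 \to \Gamma_\U \to \Gamma \to \Gamma_\T \to 1
\]
produce compatible Hochschild--Serre spectral sequences converging to $H^*(\mathfrak{g}_\Q, M)$ and $H^*(\Gamma, M)$, intertwined by $\Phi_M$. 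By the spectral sequence comparison theorem, it suffices to establish isomorphism at the $E_2$-page.

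The unipotent direction follows from a Nomizu/Pickel-style result with $\Q$-coefficients: $H^*(\mathfrak{u}_\Q, N) \cong H^*(\Gamma_\U, N)$ for any rational $\U$-module $N$ when $\Gamma_\U$ is Zariski dense in the unipotent $\Q$-group $\U$. For the torus direction, decompose a rational $\T$-module $V$ into weight spaces $V = \bigoplus_\chi V_\chi$. On nontrivial weight spaces both cohomologies vanish: the Koszul differential on the Lie algebra side is an isomorphism when the weight is nonzero, while on the group side $\chi$ restricts to a nontrivial character of the finitely generated free abelian group $\Gamma_\T$, killing its cohomology. On the zero-weight space $V^\T = V^{\Gamma_\T}$, Euclidean discreteness of $\Gamma \cap \T_s$ combined with compactness of $\T_a(\R)$ forces $\Gamma_\T$ to be a finitely generated free abelian group of rank $\dim \T$; a canonical $\Q$-rational identification $\Gamma_\T \otimes \Q \cong \mathfrak{t}_\Q$ then matches the two exterior algebras $\Lambda^*(\Gamma_\T \otimes \Q)^* \otimes V^\T$ and $\Lambda^*\mathfrak{t}_\Q^* \otimes V^\T$.

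The principal obstacle is the torus step: one must extract from the Euclidean discreteness of $\Gamma \cap \T_s$ the correct finite rank of $\Gamma_\T$ and produce a canonical $\Q$-rational identification $\Gamma_\T \otimes \Q \cong \mathfrak{t}_\Q$ compatible with the cochain-level map $\Phi_M$. The unipotent step, by contrast, is a classical application of Zariski density in unipotent $\Q$-groups, and the spectral sequence comparison is formal.
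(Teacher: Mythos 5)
Your strategy is the one the paper follows: write $\G=\U\rtimes\T$, play the Hochschild--Serre spectral sequence of $0\to\mathfrak{u}_\Q\to\mathfrak{g}_\Q\to\mathfrak{t}_\Q\to 0$ against the Lyndon--Hochschild--Serre spectral sequence of $1\to\Gamma_\U\to\Gamma\to\Gamma_\T\to 1$, prove the unipotent comparison $H^*(\mathfrak{u}_\Q,N)\cong H^*(\Gamma_\U,N)$ from Zariski density (the paper's Lemma \ref{unipotent_2} together with Hochschild's theorem), and finish with the comparison theorem. Two points of genuine divergence, both in your favor: you take $\Gamma_\T$ to be the quotient $\Gamma/\Gamma_\U$, which is what the group extension actually requires (the paper sets $\Gamma_\T=\Gamma\cap\T(\Q)$ and tacitly assumes this splits the extension), and you identify the $E_2$-pages by decomposing the coefficient module $H^q(\mathfrak{u}_\Q,M)$ into $\T$-weight spaces, with both cohomologies vanishing on nontrivial weights. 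The paper instead applies the universal coefficient theorem to write $E_2^{pq}\cong\Hom_\Q(H_p(\mathfrak{t}_\Q,\Q),H^q(\mathfrak{u}_\Q,M))$, which quietly suppresses the generally nontrivial $\mathfrak{t}_\Q$-action on the coefficients; your weight-space argument is the honest version of that reduction.

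The gap is the step you yourself flag and then leave open: that Euclidean discreteness of $\Gamma\cap\T(\Q)$ forces $\Gamma_\T$ to be free abelian of rank exactly $\dim_\Q\T$, together with a $\Q$-rational identification $\Gamma_\T\otimes\Q\cong\mathfrak{t}_\Q$. (Your appeal to compactness of an anisotropic factor is vacuous here, since $\T$ is assumed $\Q$-split.) Discreteness bounds the rank above by $\dim_\Q\T$, but nothing you have written bounds it below: $\left<(2,3)\right>$ is a discrete, Zariski dense, rank-one subgroup of $\G_m^\Q\times\G_m^\Q$, so Zariski density of the image of $\Gamma$ in $\T$ plus discreteness do not by themselves produce rank $\dim_\Q\T$, and without that the two $E_2$-pages need not agree in the $p$-direction. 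A similar caveat applies to your assertion that Zariski density of $\Gamma$ forces $\Gamma_\U=\Gamma\cap\U(\Q)$ to be Zariski dense in $\U$ (graph-type subgroups of $\G_a^\Q\times\G_m^\Q$ are Zariski dense with trivial intersection with each factor). For comparison, the paper closes the torus step only by asserting $\Gamma_\T\cong\Z^{\dim_\Q(\T)}$ outright and then invoking the Mal'tsev completion of $\Z^{\dim_\Q\T}$ and a Lambe--Priddy quasi-isomorphism, after choosing a $\Q$-linear isomorphism $\mathfrak{n}_\Q\to\mathfrak{t}_\Q$, to get $H^*(\mathfrak{t}_\Q,\Q)\cong H^*(\Gamma_\T,\Q)$. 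You are right that this is where the whole theorem lives, but identifying the obstacle is not the same as removing it, so as written your argument is incomplete at precisely that point.
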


    Our result differs from Kunkel's in that we do not assume that our Zariski dense subgroup $\Gamma$ is necessarily discrete in $\G(\R)$ with the Euclidean topology nor finitely generated. In particular, this provides a vast generalization of Kunkel's work to the nondiscrete setting. Examples to keep in mind are finitely generated torsion-free solvable groups of finite abelian ranks that are not virtually polycyclic such as the solvable Baumslag-Solitar groups and more generally finitely generated solvable groups that are $\mathbb{Q}$-linear but not linear over $\mathbb{Z}$. Such examples do not fall under the hypotheses of Kunkel's work. 
    
Additionally, our result generalizes Kunkel's result from cohomology with trivial coefficients to cohomology with coefficients in any finite dimensional rational $\G$-module.


We observe in the following series of remarks that we cannot strengthen Theorem \ref{main_thm} or conversely, drop any assumption. 

\begin{rem}
     We note that we require the irreducible  assumption for Theorem \ref{main_thm} to hold. For example, $\Z \rtimes \Z / 2 \Z$ is Zariski dense in $\Q \rtimes \Z / 2 \Z$, but these groups do not have isomorphic cohomology rings over $\Q$.
\end{rem}

\begin{rem}
We also note that we cannot extend Theorem \ref{main_thm} to Zariski dense subgroups of $k$-defined irreducible solvable linear algebraic groups where $k$ is any characteristic 0 field. For example, the finitely generated solvable group $\Gamma = \Z \wr \Z$ is not $\Q$-linear but is $\R$-linear with representation given by
$$
\Z \wr \Z
=
\left\{ \begin{bmatrix}
    \pi^n & f \\
    0 & 1
\end{bmatrix}
:
f \in \Z[\pi, \pi^{-1}], n \in \Z
\right\}
$$
where $\Z[\pi, \pi^{-1}]$ is the ring of Laurent polynomials with integer coefficients in one variable evaluated at $\pi$. It follows that $\Gamma$ is Zariski dense in the $\R$-defined linear algebraic group $\R \rtimes \R_{>0}$ where $\R_{>0}$ acts on $\R$ by multiplication. Since $\bigoplus_{i=1}^\infty \Z$ is a subgroup of $\Gamma$, it follows that $\Gamma$ has infinite cohomological dimension over $\R$. However, the Lie algebra of $\R \rtimes \R$ has finite cohomological dimension over $\R$ since it is a subalgebra of $\mathfrak{gl}_2(\R)$. Hence, for an irreducible solvable linear algebraic group to impose cohomological finiteness conditions on $\Gamma$, it is necessary that $\Gamma$ is $\Q$-linear.
\end{rem}

\begin{rem}
We further have that Theorem $\ref{main_thm}$ does not hold for arbitrary Zariski dense subgroups of irreducible solvable $\Q$-defined linear algebraic groups. In fact, $S$-arithmetic lattices provide counterexamples in the following way. Given an irreducible solvable $\Q$-defined linear algebraic group $\G = \U \rtimes \T$ where $\U$ is the $\Q$-defined unipotent radical and $\T$ is the $\Q$-split maximal torus, we have that $\G(\Z[\frac{1}{S}])$ is Zariski dense for all sets of primes but $\G(\Z[\frac{1}{S}]) \cap \T(\Q)$ is not discrete in $\T(\R)$ in the Euclidean topology. In this case, $\G(\mathbb{Z}[\frac{1}{S}])$ has cohomological dimension equal to its Hirsch length. While the Hirsch length of $\U(\Z[\frac{1}{S}])$ is bounded by $\dim_\Q(\U)$, the Hirsch length of $\T(\Z[\frac{1}{S}])$ is given by $\dim_{\Q}(\T) \cdot |S|$.  Therefore, for larger and larger sets of primes $S$, $\G(\Z [\frac{1}{S}])$ has larger and larger Hirsch length. This implies that the groups $\G(\Z[\frac{1}{S}])$ for varying sets of primes cannot have isomorphic cohomology rings over $\Q$.
\end{rem}

\begin{rem}
    We conclude by observing that Theorem $\ref{main_thm}$ does not hold if we drop the linear algebraic group structure on $\G$ and simply consider $\G$ as a connected solvable Lie group and $\Gamma$ as a cocompact lattice. For example, if we realize $\C$ as $\R + i \R$ and assume that the one parameter group $X(t)$ acts by multiplication by $e^{it}$, we have that $\G = \R^2 \rtimes \R$ is a solvable Lie group which does not admit a linear algebraic group structure. We also have that $(\Z + i \Z) \rtimes \Z$ is a cocompact lattice in $\G$ that is isomorphic to $\Z^3$ which is not Zariski dense. By direct computation, one can see that the lattice $\Z^3$ and the Lie algebra $\mathfrak{g}$ do not have isomorphic cohomology rings over $\Q$. Hence, we require the linear algebraic group structure on $\G$.
\end{rem}

\begin{rem}
    We also refer the reader to Kasuya \cite[Theorem 1.3]{kasuya1} who derived these results when $\Gamma$ is a full subgroup of $\textbf{G}$.
\end{rem}

Additionally, we prove the following theorem regarding the restriction map in rational cohomology from an irreducible solvable $\Q$-defined linear algebraic group to an arbitrary Zariski dense subgroup.  

\begin{thm}\label{resmap_solv}
Let $\G$ be an irreducible solvable $\Q$-defined linear algebraic group and $\Gamma \leq \G(\Q)$ be a Zariski dense subgroup. Then for every finite dimensional rational $\G$-module $M$, the restriction map $r_{M} \colon H^*(\G, M) \to H^*(\Gamma, M)$ is an injection of cohomology rings.
\end{thm}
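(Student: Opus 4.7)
The plan is to deduce Theorem~\ref{resmap_solv} from Theorem~\ref{main_thm} by passing to a Zariski dense subgroup $\Gamma_0 \leq \Gamma$ satisfying the discreteness hypothesis of that theorem. Since the restriction map $H^*(\G, M) \to H^*(\Gamma_0, M)$ factors through $r_M$, injectivity of the former will imply injectivity of $r_M$.

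The first step will be to prove that every Zariski dense $\Gamma \leq \G(\Q)$ contains a Zariski dense subgroup $\Gamma_0 \leq \Gamma$ with $\Gamma_0 \cap \T_d$ discrete in the Euclidean topology on $\G(\R)$, where $\T_d$ is the $\Q$-split maximal torus of $\G$. One selects finitely many elements of $\Gamma$ whose projections to $\G/\U \cong \T$ generate a Zariski dense subgroup of $\T$ mapping to a discrete subgroup of $\T_d(\R)$, then augments with enough additional elements of $\Gamma$ (whose unipotent components generate a Zariski dense subgroup of the unipotent radical $\U$) to ensure Zariski density in $\G$. With such a $\Gamma_0$ in hand, Theorem~\ref{main_thm} yields an isomorphism $\Phi_M^{\Gamma_0} \colon H^*(\mathfrak{g}_\Q, M) \to H^*(\Gamma_0, M)$.

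Next, I would invoke the natural comparison between rational cohomology and Lie algebra cohomology for an irreducible solvable $\Q$-defined linear algebraic group in characteristic zero. Because $\T$ is linearly reductive, the Hochschild--Serre spectral sequence of $1 \to \U \to \G \to \T \to 1$ collapses, yielding $H^n(\G, M) \cong H^n(\mathfrak{u}_\Q, M)^{\mathfrak{t}_\Q}$; and the Lie algebraic Hochschild--Serre spectral sequence for $\mathfrak{u}_\Q \triangleleft \mathfrak{g}_\Q$ also degenerates (since $\mathfrak{t}_\Q$ is abelian and acts semisimply), exhibiting $H^n(\G, M)$ as the $p=0$ summand of $H^n(\mathfrak{g}_\Q, M) = \bigoplus_{p+q=n} \Lambda^p \mathfrak{t}_\Q^* \otimes H^q(\mathfrak{u}_\Q, M)^{\mathfrak{t}_\Q}$. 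This yields a natural injection $\iota \colon H^*(\G, M) \hookrightarrow H^*(\mathfrak{g}_\Q, M)$. By naturality of $\Phi_M$ (apparent from its cochain-level construction in the proof of Theorem~\ref{main_thm}), the composite $\Phi_M^{\Gamma_0} \circ \iota$ agrees with the restriction $H^*(\G, M) \to H^*(\Gamma_0, M)$. Being the composition of an injection and an isomorphism, this restriction is injective; and since it factors as $H^*(\G, M) \xrightarrow{r_M} H^*(\Gamma, M) \to H^*(\Gamma_0, M)$, we conclude that $r_M$ is injective.

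The main obstacle is the construction of $\Gamma_0$ in the first step, where Zariski density in $\G$ must be reconciled with discreteness of $\Gamma_0 \cap \T_d$ in $\T_d(\R)$. This tension is genuine: for example, $\langle 2, 3 \rangle \leq \G_m(\Q)$ is Zariski dense in $\G_m$ but its logarithmic image $\Z\log 2 + \Z\log 3$ is dense in $\R$, so naive choices of generators fail. The generators must therefore be chosen with care, first by picking a discrete Zariski dense subgroup inside $\pi(\Gamma) \leq \T(\Q)$, then lifting and supplementing by elements contributing the required Zariski density in the unipotent radical. A secondary and more routine concern is verifying the naturality of $\Phi_M$ under inclusions of Zariski dense subgroups satisfying the discreteness hypothesis, which should follow directly from its construction.
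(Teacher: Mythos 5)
Your reduction fails at its first step: it is not true that every Zariski dense $\Gamma \leq \G(\Q)$ contains a Zariski dense subgroup $\Gamma_0$ to which Theorem \ref{main_thm} can be applied. Take $\G = \G_a^\Q \times \G_m^\Q$ and $\Gamma = \left<(1,2)\right> \cong \Z$. This $\Gamma$ is Zariski dense (its closure surjects onto both factors, and the only $\Q$-closed subgroup of $\G_a^\Q \times \G_m^\Q$ doing so is $\G$ itself, since every algebraic homomorphism between $\G_a^\Q$ and $\G_m^\Q$ is trivial), yet every subgroup $\Gamma_0 \leq \Gamma$ is trivial or infinite cyclic, so $H^n(\Gamma_0, \Q) = 0$ for $n \geq 2$, whereas $H^2(\mathfrak{g}_\Q, \Q) \cong \Q$ because $\mathfrak{g}_\Q$ is abelian of dimension $2$. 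Hence no subgroup of $\Gamma$ can satisfy the conclusion of Theorem \ref{main_thm}, so none satisfies its effective hypotheses. The obstruction is exactly the one you flagged but did not resolve: what Theorem \ref{main_thm} actually requires is that the literal intersection $\Gamma_0 \cap \T(\Q)$ be a discrete \emph{and Zariski dense} (hence rank $\dim_\Q(\T)$) subgroup of $\T(\Q)$; here $\Gamma \cap \T(\Q) = \{1\}$, so no choice of generators inside $\Gamma$ can produce the required $\Gamma_0$. (The later steps of your argument --- that $\Phi_M(x \otimes 1) = r_M(x)$ on the $p=0$ summand by the explicit construction of $\Phi_M$, and that injectivity of $H^*(\G,M) \to H^*(\Gamma_0,M)$ forces injectivity of $r_M$ --- are fine.)

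The paper's proof avoids putting a torus lattice inside $\Gamma$ altogether: it uses Hochschild's isomorphism $H^*(\G,M) \cong H^*(\U,M)^{\T} \hookrightarrow H^*(\U,M)$ together with Lemma \ref{unipotent_2} applied to $\Gamma_\U = \Gamma \cap \U(\Q)$, and reads off injectivity of $r_M$ from the commuting square induced by the inclusion $\Gamma_\U \hookrightarrow \Gamma$. To salvage your approach you would have to replace the appeal to Theorem \ref{main_thm} by a statement concerning only the unipotent part, which is essentially Lemma \ref{unipotent_2}. (Be aware that the paper's own write-up implicitly assumes $\Gamma \cap \U(\Q)$ is Zariski dense in $\U$, which also fails in the example above; that is a gap in the paper's argument, but it does not make your reduction work.)
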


\subsection{Applications} 
We derive several applications of Theorem \ref{main_thm}. We first relate the cohomology rings of suitable Zariski dense subgroups of irreducible solvable $\Q$-defined linear algebraic groups to the cohomology rings of their finite index subgroups. We then determine the arithmetic properties of the representation of $\Out(\Gamma)$ on $H^*(\Gamma,\C)$ when $\Gamma$ is a torsion-free, finitely generated solvable group of finite abelian ranks. Finally, we show that there exist countably many examples of pairwise non-isomorphic, non-commensurable $\Q$-linear solvable groups with isomorphic cohomology rings over $\Q$. In particular, solvable Baumslag-Solitar groups yield such examples. 

\subsubsection{Finite index subgroups of Zariski dense subgroups of irreducible solvable $\Q$-defined linear algebraic groups}

By \cite{borel}, it follows that every closed subgroup of finite index of a linear algebraic group contains the connected component of the identity.  Therefore, given the irreducible assumption of Theorem $\ref{main_thm}$, we have the following immediate corollary. 

\begin{cor}\label{cor:finite_index}
    Let $\G$ be an irreducible solvable $\Q$-defined linear algebraic group  with associated $\Q$-defined Lie algebra $\mathfrak{g}_{\Q}$, and let $\Gamma \leq \G(\Q)$ be a Zariski dense subgroup such that it intersects the $\Q$-split maximal torus discretely in the Euclidean topology. Then for every finite dimensional rational $\G$-module $M$ and finite index subgroup $\Delta \leq \Gamma$, we have $H^*(\Delta, M)$ is isomorphic to $H^*(\Gamma, M)$ as cohomology rings.
\end{cor}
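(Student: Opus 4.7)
The plan is to apply Theorem \ref{main_thm} to both $\Gamma$ and $\Delta$ and then compose the resulting isomorphisms. Explicitly, once we verify that $\Delta$ itself satisfies the hypotheses of Theorem \ref{main_thm}, we obtain ring isomorphisms $\Phi_M^\Gamma \colon H^*(\mathfrak{g}_\Q, M) \to H^*(\Gamma, M)$ and $\Phi_M^\Delta \colon H^*(\mathfrak{g}_\Q, M) \to H^*(\Delta, M)$, and then $\Phi_M^\Delta \circ (\Phi_M^\Gamma)^{-1}$ is the desired isomorphism $H^*(\Gamma, M) \cong H^*(\Delta, M)$.

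The first thing to check is that $\Delta$ is Zariski dense in $\G$. Let $\overline{\Delta}$ denote its Zariski closure in $\G$. Since $[\Gamma : \Delta] < \infty$, we may write $\Gamma$ as a finite union of cosets $\Gamma = \bigcup_{i=1}^n \Delta g_i$, and hence its Zariski closure decomposes as $\G = \overline{\Gamma} = \bigcup_{i=1}^n \overline{\Delta}\, g_i$, which shows that $\overline{\Delta}$ is a closed subgroup of $\G$ of finite index. By the result from \cite{borel} cited in the paragraph preceding the corollary, every such subgroup contains the identity component of $\G$; but $\G$ is irreducible, hence connected, so $\overline{\Delta} = \G$.

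The second hypothesis, discreteness of the intersection with the $\Q$-split maximal torus, is hereditary: $\Delta \cap \T(\Q)$ is a subset of $\Gamma \cap \T(\Q)$, so it is discrete in $\T(\R)$ whenever $\Gamma \cap \T(\Q)$ is. Finally, a finite dimensional rational $\G$-module $M$ restricts to a $\Gamma$-module and a $\Delta$-module in a compatible way, so both cohomology rings $H^*(\Gamma, M)$ and $H^*(\Delta, M)$ are defined with respect to the same underlying action of $\G$, making it legitimate to feed the same $M$ into Theorem \ref{main_thm} for both groups.

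There is no real obstacle here; the corollary is essentially a formal consequence of Theorem \ref{main_thm}, since the two hypotheses of the theorem (Zariski density in $\G$ and discrete intersection with the $\Q$-split maximal torus) pass to finite-index subgroups using only the irreducibility of $\G$ together with the elementary observation on subsets of discrete sets.
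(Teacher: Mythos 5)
Your proposal is correct and follows the same route the paper intends: the paper derives the corollary immediately from Theorem \ref{main_thm} by invoking the fact from \cite{borel} that a closed finite-index subgroup contains the identity component, so that $\overline{\Delta} = \G$ by irreducibility, with the discreteness hypothesis passing to $\Delta$ trivially. Your verification of Zariski density via the coset decomposition and the composition $\Phi_M^\Delta \circ (\Phi_M^\Gamma)^{-1}$ is exactly the argument the paper leaves implicit.
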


\begin{rem}
We note that this does not hold true in the setting of Zariski dense subgroups of connected semisimple Lie groups defined over $\Q$. By \cite[Theorem 1.1]{strongly_dense_free_subgroups}, it follows that every connected semisimple $\Q$-defined linear  algebraic group contains a Zariski dense free subgroup $F_2$. Since the cohomology rings of non-abelian free groups of different ranks are pairwise non-isomorphic with trivial $\Q$-coefficients, Corollary \ref{cor:finite_index} fails in this setting. 
\end{rem}

\subsubsection{Arithmetic Constructions} We refer the reader to \cite{dere_pengitore} for more details regarding the following discussion.

Let $\Gamma$ be a torsion-free, finitely generated solvable group of finite abelian ranks. By \cite[Theorem 1.3]{dere_pengitore}, there exists a faithful representation $\varphi \colon \Gamma \to \GL(n,\Q)$ satisfying the following properties. First, it follows that the Zariski closure of $\varphi(\Gamma)$, denoted by $\HH_\Gamma$, is such that the centralizer of the unipotent radical $\U(\HH_\Gamma)$ is contained in $\U(\HH_\Gamma)$ and $\dim_\Q(\U(\HH_\Gamma)) = h(\Gamma)$ where $h(\Gamma)$ is the Hirsch length of $\Gamma$. Moreover, there exists a finite set of primes $S$ such that $\Gamma \cap \HH_\Gamma(\Z[\frac{1}{S}])$ has finite index in $\varphi(\Gamma)$. Specifically, $S$ is the set of primes $p$ for which there exists a pair of subgroups $N \trianglelefteq M$ such that $M/N$ is a $p$-quasicyclic group (see \cite{lennox_robinson} for more details). We then say a group $\Gamma$ satisfying the above conditions has a \emph{$\Q$-algebraic hull} given by $\HH_\Gamma$. Finally \cite[Theorem 1.3]{dere_pengitore} implies that the pair $(\HH_\Gamma, \varphi)$ is well defined up to $\Q$-isomorphism and that each automorphism of $\Gamma$ extends to a unique $\Q$-defined automorphism of $\HH_\Gamma$. We then note that $\Aut(\Gamma)$ is a subgroup of the group of algebraic automorphisms $\mathcal{A}_\Gamma = \Aut_a(\HH_\Gamma)$ of $\HH_\Gamma$.

The automorphism group $\Aut(\Gamma)$ admits an action on $H^*(\Gamma, \Q)$, and since the inner automorphisms act trivially on $H^*(\Gamma, \Q)$, the outer automorphism group $\Out(\Gamma)$ is naturally represented on the cohomology ring $H^*(\Gamma, \Q)$.  When the $\Q$-algebraic hull $\HH_\Gamma$ is irreducible, Theorem \ref{main_thm} implies that $H^*(\mathfrak{h},\Q)$ is isomorphic to $H^*(\Gamma, \Q)$ as cohomology rings where $\mathfrak{h}$ is the Lie algebra of $\HH_\Gamma$. We note that $\Inn_{\HH_\Gamma}$ is a $\Q$-closed subgroup, and thus, we then call the quotient
$$
\Out_a(\HH_\Gamma) = \mathcal{A}_\Gamma / \Inn_{\HH_\Gamma}
$$
the group of algebraic outer automorphisms of $\HH_R$. It is again a $\Q$-defined linear algebraic where $\Aut(\Gamma) \leq \mathcal{A}_\Gamma$, and we obtain a group homomorphism
$$
\pi_\Gamma \colon \Out(\Gamma) \to \Out_a(\HH_\Gamma)
$$
by restricting the quotient homomorphism $\mathcal{A}_\Gamma \to \Out_a(\HH_\Gamma)$ to the subgroup $\Aut(\Gamma) \leq \mathcal{A}_\Gamma$. We may view $\GL(H^*(\Gamma, \Q))$ as a $\Q$-defined linear algebraic group since $\mathfrak{h}$ is $\Q$-defined, and therefore, there exists an induced $\Q$-defined homomorphism $\rho \colon \Out(\HH_\Gamma) \to \GL(H^*(\Gamma, \Q))$. We see that the finite dimensional rational vector space $H^*(\Gamma, \Q)$ admits a $\Z[\frac{1}{S}]$-structure given by the image of the base change homomorphism $H^*(\Gamma, \Z[\frac{1}{S}]) \to H^*(\Gamma, \Q)$. We then see that representation
\begin{center}
$\rho \circ \pi_\Gamma \colon \Out(\Gamma) \to \GL(H^*(\Gamma, \Q))$
\end{center}
is then integral with respect to the above $\Z[\frac{1}{S}]$-structure. By \cite[Theorem C]{dere_pengitore}, we then see that the image of $\Out(\Gamma)$ in $\Out(\HH_{\Gamma})$ is a $S$-arithmetic lattice in its Zariski closure. Since the image of a $S$-arithmetic lattice under a $\Q$-defined homomorphism is a $S$-arithmetic lattice, we have the following theorem.
\begin{thm}\label{MT3}
     Let $\Gamma$ be a  torsion-free, finitely generated solvable group of finite abelian ranks with spectrum $S$ such that its $\Q$-algebraic hull is irreducible and that $\Fitt(\Gamma)$ is $S$-arithmetic. Then the image of $\Out(\Gamma)$ under $\rho \circ \pi_\Gamma$  is a $S$-arithmetic lattice in its Zariski closure in $\GL(H^*(\Gamma, \Q)).$ 
\end{thm}

\begin{rem}
In the case of when $\Gamma$ is polycyclic, Baues and Grunewald \cite[Theorem 1.13]{baues_grunewald} employ geometric methods in \cite{bause} to construct a homomorphism $\eta \colon \Out(\textbf{H}_\Gamma) \to \GL(H^*(\Gamma, \C))$ which extends the natural representation $\rho \colon \Out(\Gamma) \to \GL(H^*(\Gamma, \C))$. In particular, they demonstrate that the image of the representation $\rho$ is arithmetic in its Zariski closure in $\GL(H^*(\Gamma, \C)).$  If $\Gamma$ is not necessarily polycyclic,  the methods of Baues do not generalize to the more general collection of torsion-free, finitely generated solvable groups of finite abelian ranks with irreducible $\Q$-algebraic hull. While $\Gamma$ admits an affine action on $\textbf{U}(\textbf{H}_\Gamma)$, the action of $\Gamma$ is not properly discontinuous. It then follows that $\U(\textbf{H}_\Gamma)/\Gamma$ is not a $K(\Gamma,1).$ While automorphisms of $\Gamma$ will descend to $\U(\textbf{H}_\Gamma)/\Gamma$, they will not induce an isomorphism of $H^*(\Gamma, \Q)$, and this action need not agree with the representation $\rho$.
\end{rem}

Given that the stabilizers of cohomology classes in $H^*(\Gamma, \Q)$ are $S$-arithmetic, we have the following immediate corollary.

\begin{cor}
    Let $\Gamma$ be a  torsion-free, finitely generated solvable group of finite abelian ranks such that its $\Q$-algebraic hull is irreducible and that $\Fitt(\Gamma)$ is $S$-arithmetic. If $[\alpha] \in H^*(\Gamma, \Q)$ is a fixed cohomology class, then $\text{Stab}_{\rho \circ \pi_\Gamma(\Out(\Gamma))}([\alpha])$ is $S$-arithmetic in its Zariski closure in $\GL(H^*(\Gamma, \Q))$.
\end{cor}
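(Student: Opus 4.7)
The plan is to combine Theorem \ref{MT3} with the classical restriction principle for $S$-arithmetic subgroups of $\Q$-defined linear algebraic groups. Let $\Lambda$ denote the image $\pi_\Gamma \circ \lambda_{\mathcal{O}_\Gamma}(\Out(\Gamma))$ sitting inside $\GL(H^*(\Gamma,\Q))$, and let $\mathbf{L}$ be its Zariski closure. Theorem \ref{MT3} simultaneously produces the $\Q$-defined homomorphism $\lambda_{\mathcal{O}_\Gamma}$ and asserts that $\Lambda$ is $S$-arithmetic in $\mathbf{L}$, meaning commensurable with $\mathbf{L}(\Z[1/S])$ for the $\Z[1/S]$-structure on $H^*(\Gamma,\Q)$ inherited from integral cohomology.

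Since $\mathbf{L}$ acts $\Q$-rationally on the vector space $H^*(\Gamma,\Q)$ and $[\alpha]$ is a $\Q$-rational point, the pointwise stabilizer $\mathbf{L}_{[\alpha]}$ is a $\Q$-defined Zariski closed subgroup of $\mathbf{L}$. The group-theoretic stabilizer of $[\alpha]$ in $\Lambda$ is then precisely $\Lambda \cap \mathbf{L}_{[\alpha]}(\Q)$, and its Zariski closure $\mathbf{K}$ is again a $\Q$-defined closed subgroup, now of $\mathbf{L}_{[\alpha]}$.

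The final step shows that $\Lambda \cap \mathbf{L}_{[\alpha]}(\Q)$ is commensurable with $\mathbf{K}(\Z[1/S])$. Passing to a finite-index subgroup of $\Lambda$ contained in $\mathbf{L}(\Z[1/S])$, its intersection with $\mathbf{L}_{[\alpha]}(\Q)$ is commensurable with $\mathbf{L}(\Z[1/S]) \cap \mathbf{L}_{[\alpha]}(\Q) = \mathbf{L}_{[\alpha]}(\Z[1/S])$, and further intersecting with $\mathbf{K}(\Q)$ yields commensurability with $\mathbf{K}(\Z[1/S])$. The main obstacle is this last bookkeeping of $\Z[1/S]$-points; the subtle point is that $\mathbf{K}$ may be strictly smaller than $\mathbf{L}_{[\alpha]}$, so one must work with the actual Zariski closure of the stabilizer rather than the a priori larger ambient group $\mathbf{L}_{[\alpha]}$.
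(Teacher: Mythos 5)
Your argument is correct and follows exactly the route the paper intends: the paper derives this corollary in one line from Theorem \ref{MT3} together with the standard fact that the stabilizer of a rational vector in an $S$-arithmetic group acting $\Q$-rationally is again $S$-arithmetic, and your proposal simply supplies the bookkeeping (the stabilizer is $\Lambda \cap \mathbf{L}_{[\alpha]}(\Q)$, finite index passes through intersections, and one descends from $\mathbf{L}_{[\alpha]}$ to the actual Zariski closure $\mathbf{K}$) that the paper leaves implicit. Your attention to the possible gap between $\mathbf{K}$ and $\mathbf{L}_{[\alpha]}$ is a genuine refinement of detail, not a departure in method.
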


\subsubsection{$S$-arithmetic lattices in a fixed $\Q$-defined solvable linear algebraic group} 
    
Given a torsion-free, finitely generated nilpotent group $\Delta$, we denote the $\Q$-algebraic hull by $\HH_\Delta$, and we note that it is irreducible. We then have by \cite[Proposition 3.5]{dere_pengitore} that the image of any injective endomorphism $\varphi \colon \Delta \to \Delta$ has finite index in $\Delta.$ \cite{dere_pengitore} also shows that the group of automorphisms $\Aut(\HH_\Delta)$ is a $\Q$-defined linear algebraic group. Letting $\Psi_\varphi$ be the unique extension of $\varphi$ to an automorphism of $\HH_\Delta$, we let $\overline{\left<\Psi_{\varphi}^\ell \: : \: \ell \in \Z  \right>}$ be the Zariski closure of $\left<\Psi_{\varphi}^\ell \: : \: \ell \in \Z  \right>$ in $\Aut(\HH_\Delta)$. When $\overline{\left<\Psi_{\varphi}^\ell \: : \: \ell \in \Z  \right>}$ is irreducible, it follows that there exists an injective $\mathbb{Q}$-defined homomorphism $\lambda \colon \mathbb{Q} \to \Aut(\HH_\Delta)$ whose image is $\overline{\left<\Psi_{\varphi}^\ell \: : \: \ell \in \Z  \right>}$ where $\lambda(m) = \Psi_{\varphi}^m$ for $m \in \Z .$ We then have that any group of the form
    \begin{center}
    $\Delta_\varphi = \left<\Delta,t \: | \: t mt^{-1} = \varphi(m), m \in \Delta \right>$ 
    \end{center}

    is a finitely generated Zariski dense subgroup of $\HH_\Delta \rtimes_\lambda \mathbb{Q}$. We also have that the group given by 
    \begin{center}
    $\Delta_{\varphi^s} = \left<\Delta, t \: | \: t mt^{-1} = \varphi^s(m), m \in \Delta \right>$
    \end{center}
    for $s \in \Z$ is Zariski dense in $\HH_\Delta \rtimes_\lambda \mathbb{Q}.$ We thus have the following theorem.
    
   \begin{thm}\label{S arithmetic thm}
       Let $\Delta$ be a torsion-free, finitely generated nilpotent group with an injective endomorphism $\varphi \colon \Delta \to \Delta$, and let $\Psi_\varphi$ be the unique extension of $\varphi$ to an automorphism of the $\Q$-algebraic hull $\HH_\Delta$ such that the Zariski closure of $\left<\Psi_\varphi^\ell \: : \: \ell \in \mathbb{N} \right>$ is irreducible in $\Aut(\HH_\Delta)$. Then, $H^*(\Delta_{\varphi^s}, \Q)$ is isomorphic to $H^*(\Delta_{\varphi^\ell}, \Q)$ as cohomology rings for all $s, \ell \in \mathbb{N}$.
   \end{thm}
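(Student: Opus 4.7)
The plan is to apply Theorem \ref{main_thm} to the pair consisting of the ambient linear algebraic group $\G := \HH_\Delta \rtimes_\lambda \mathbb{Q}$ and the Zariski dense subgroup $\Delta_{\varphi^s}$ for each $s \in \mathbb{N}$. Since the Lie algebra $\mathfrak{g}_\Q$ of $\G$ is intrinsic to $\G$ and therefore does not depend on $s$, Theorem \ref{main_thm} will produce cohomology ring isomorphisms $H^*(\mathfrak{g}_\Q, \Q) \xrightarrow{\sim} H^*(\Delta_{\varphi^s}, \Q)$ for every $s$, and composing these will give the desired isomorphisms between the $H^*(\Delta_{\varphi^s}, \Q)$.

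First I would verify that $\G$ satisfies the hypotheses of Theorem \ref{main_thm}. Since $\Delta$ is a torsion free, finitely generated nilpotent group, its $\Q$-algebraic hull $\HH_\Delta$ is unipotent, hence irreducible. The irreducibility hypothesis on $\overline{\langle \Psi_\varphi^\ell : \ell \in \mathbb{N}\rangle}$ together with the $\Q$-defined injective homomorphism $\lambda \colon \mathbb{Q} \to \Aut(\HH_\Delta)$ from the preceding discussion makes $\lambda(\mathbb{Q})$ a one dimensional connected $\Q$-defined subgroup of $\Aut(\HH_\Delta)$. Because no injective abstract group homomorphism from the additive group $\mathbb{Q}$ lands in $\mathbb{G}_m(\mathbb{Q}) = \mathbb{Q}^\times$, this subgroup must be isomorphic to $\mathbb{G}_a$ and is therefore unipotent. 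Consequently $\G$ is an extension of a unipotent group by a unipotent group and a semidirect product of irreducible $\Q$-defined groups, so it is an irreducible unipotent, and a fortiori solvable, $\Q$-defined linear algebraic group.

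Next, the discussion preceding the theorem asserts that $\Delta_{\varphi^s}$ is Zariski dense in $\G$ under the embedding sending $m \in \Delta$ to $(m, 0)$ and $t$ to $(e, s)$; the defining relation $t m t^{-1} = \varphi^s(m)$ is compatible with the conjugation action on $\HH_\Delta$ since $\lambda(s) = \Psi_\varphi^s$ extends $\varphi^s$. Because $\G$ is unipotent, its $\Q$-split maximal torus is trivial, so the discreteness condition of Theorem \ref{main_thm} is vacuously satisfied. Applying Theorem \ref{main_thm} with $M = \Q$ then produces cohomology ring isomorphisms $\Phi_s \colon H^*(\mathfrak{g}_\Q, \Q) \xrightarrow{\sim} H^*(\Delta_{\varphi^s}, \Q)$, and $\Phi_\ell \circ \Phi_s^{-1}$ is the required isomorphism.

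The main obstacle I anticipate is not the invocation of Theorem \ref{main_thm} itself but rather the bookkeeping to confirm that $\G$ is genuinely irreducible and that $\Delta_{\varphi^s}$ is genuinely Zariski dense in the sense required by Theorem \ref{main_thm}. Both of these hinge on combining the irreducibility hypothesis on $\overline{\langle \Psi_\varphi^\ell \rangle}$ with the standard structural properties of the $\Q$-algebraic hull $\HH_\Delta$ from \cite{dere_pengitore}; once those points are pinned down, the cohomological conclusion follows immediately from Theorem \ref{main_thm}.
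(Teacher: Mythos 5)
Your overall strategy is exactly the paper's: the theorem is stated as an immediate corollary of the preceding discussion, namely that each $\Delta_{\varphi^s}$ is a Zariski dense subgroup of the fixed irreducible solvable $\Q$-defined group $\G = \HH_\Delta \rtimes_\lambda \Q$ satisfying the hypotheses of Theorem \ref{main_thm}, so that every $H^*(\Delta_{\varphi^s},\Q)$ is identified with $H^*(\mathfrak{g}_\Q,\Q)$. The paper offers no further argument, so composing the isomorphisms $\Phi_s$ as you do is the intended proof.

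There is, however, a genuine error in your verification of the hypotheses: the claim that $\overline{\langle \Psi_\varphi^\ell\rangle}$ must be isomorphic to $\mathbb{G}_a$, hence that $\G$ is unipotent and the discreteness condition is vacuous. This fails in the paper's own headline instances of the theorem: for $BS(1,n) = \Delta_\varphi$ with $\Delta = \Z$ and $\varphi$ multiplication by $n$, the automorphism $\Psi_\varphi$ is semisimple and $\overline{\langle \Psi_\varphi^\ell\rangle} \cong \mathbb{G}_m$, so $\G$ is the $ax+b$ group $\mathbb{G}_a \rtimes \mathbb{G}_m$ and its $\Q$-split maximal torus is nontrivial (the same happens for the Heisenberg dilations $\delta_n$). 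Your supporting assertion that no injective abstract homomorphism $\Q \to \Q^\times$ exists is also false ($\Q^\times_{>0}$ is a $\Q$-vector space); what is true is that no nontrivial algebraic homomorphism $\mathbb{G}_a \to \mathbb{G}_m$ exists, which only shows that the paper's notation ``$\lambda \colon \Q \to \Aut(\HH_\Delta)$'' is being used loosely for the one-dimensional irreducible group $\overline{\langle \Psi_\varphi^\ell\rangle}$, which may be either $\mathbb{G}_a$ or $\mathbb{G}_m$ depending on the Jordan type of $\Psi_\varphi$. The fix is short but not vacuous: in either case $\Delta_{\varphi^s} \cap \T(\Q) = \langle t \rangle$ is the infinite cyclic group generated by $\Psi_\varphi^s$, which is discrete in $\T(\R)$ (for the toral case because $\Psi_\varphi^s$ generates a discrete subgroup of $\R^\times$, the closure being all of $\mathbb{G}_m$ forcing the eigenvalue to have absolute value different from $1$). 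With that substituted for the unipotence claim, the hypotheses of Theorem \ref{main_thm} are met and the rest of your argument goes through.
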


     In general, we do not know the isomorphism type of $\Delta_{\varphi^s}$ for a given $s$. However, pairwise non-isomorphic examples provided by the above construction include the Baumslag-Solitar groups $BS(1,n)$ which are given by the presentation
    \begin{center}
        $BS(1,n) = \left<x,y \: | \: x^{-1}yx = y^n \right>$.
    \end{center}

In particular, $BS(1,n)$ is Zariski dense in $\Q \rtimes \Q$ for all natural numbers $n \geq 2$. By Theorem \ref{main_thm}, we see that $H^*(BS(1,s),\Q)$ is isomorphic to $H^*(BS(1,\ell),\Q)$ as cohomology rings for all natural numbers $s, \ell \geq 2$. By \cite[Theorem 1.2]{commensurable_BS_groups}, the Baumslag-Solitar groups $BS(1,n)$ are non-commensurable when $s$ and $\ell$ are not common powers of the same integer. Therefore, we construct countably many examples of pairwise non-isomorphic, non-commensurable Baumslag-Solitar groups with isomorphic cohomology rings over $\Q$. While this fact is well known using topological techniques, we provide an algebraic group theoretic proof of this classic computation.

    We now turn to an additional application of Theorem \ref{S arithmetic thm}. Let $R$ be a subring of $\Q$, and let $H(R)$ be the Heisenberg group which is given by
    $$
    H(R) = 
    \left\{ \begin{bmatrix}
        1 & a & b \\
        0 & 1 & c\\
        0 & 0 & 1
    \end{bmatrix} : a,b,c \in R\right\}.
    $$ For a rational number $r > 0$, we define the map $\delta_r \colon H(\Q) \to H(\Q)$ given by
    $$
\delta_r\left( \begin{bmatrix}
        1 & a & b \\
        0 & 1 & c\\
        0 & 0 & 1
    \end{bmatrix}\right) = \begin{bmatrix}
        1 & ra & r^2b \\
        0 & 1 & rc\\
        0 & 0 & 1
    \end{bmatrix}.
    $$
    It follows that $\Z$ acts on $H(\Q)$ via $\ell \to \delta_r^\ell$ for $\ell \in \Z.$ The group $\overline{\left< \delta_r^\ell \: : \: \ell \in \Z\right>}$ in $\Aut(H(\Q))$ is given by $\Q$, and thus, $H(\Q) \rtimes \Q$ is an irreducible solvable $\Q$-defined linear algebraic group.
    
    For a natural number $n > 1$, the group $H(\Z)_{\delta_n}$ is an ascending HNN extension that is Zariski dense in $H(\Q) \rtimes \Q$. Letting $\mathfrak{h}$ be the Lie algebra of $H(\Q)$, we have that 
    $$
d\delta_n\left( \begin{bmatrix}
        0 & a & b \\
        0 & 0 & c\\
        0 & 0 & 0
    \end{bmatrix}\right) = \begin{bmatrix}
        0 & na & n^2b \\
        0 & 0 & nc\\
        0 & 0 & 0
    \end{bmatrix}
    $$
    where $d$ is the differential. Thus, for $n > 1$, we have that $\mathfrak{h} \rtimes_{d \delta_n} \Q$ are isomorphic as $\Q$-defined Lie algebras. Since $H(\Z)_{\delta_n}$ is Zariski dense in $H(\Q) \rtimes \Q$ for all $n$, Theorem \ref{S arithmetic thm} implies that they have isomorphic cohomology rings over $\Q$.

We conclude this section with a few additional remarks regarding Theorem \ref{main_thm}.

\begin{rem}
    We first observe that the irreducible assumption of $\overline{\left<\Psi_{\varphi}^\ell \: : \: \ell \in \Z  \right>}$ is necessary. For instance, let $\varphi \colon \Z^2 \to \Z^2$ be the automorphism given by the switching of coordinates. Since $\varphi$ has order two, the image of $\varphi$ in $\GL_2(\Q)$ is not irreducible. We then have that $\Q^2 \rtimes_\varphi C_2$ where $C_2$ is the cyclic group of order two acting on $\Q^2$ by $\varphi$ does not contain $\Z^2 \rtimes_\varphi \Z$ as a Zariski dense subgroup. Hence, Theorem \ref{S arithmetic thm} fails in this setting. 

    We can apply the same reasoning above for when $\varphi$ has infinite order, but in this case, the Zariski closure is not irreducible. For example, consider the automorphism $\varphi \colon \Z^4 \to \Z^4$ given by
    $$
    \varphi \left( \begin{bmatrix}
        v_1 \\
        v_2 \\
        v_3\\
        v_4
    \end{bmatrix} \right)
    =
    \begin{bmatrix}
        2v_1 + v_2 \\
        v_1 + v_2\\
        v_4\\
        v_3
    \end{bmatrix}.
    $$
    We can view this automorphism as having the first two coordinates as length two vectors which are derived by multiplication by the matrix $A =  \begin{bmatrix}
        2 & 1\\
        1 & 1
    \end{bmatrix}$ and the last two coordinates swapped. We then have $\mathbf{M} = \overline{\left<\Psi_{\varphi}^\ell \: : \: \ell \in \Z  \right>}$ is isomorphic to a finite extension of $\Q$ in $\GL_4(\Q)$. Hence, $\Z^5$ is Zariski dense in $\Q^4 \rtimes \mathbf{M}$, but since $\Q^4 \rtimes \mathbf{M}$ is not irreducible, we cannot apply Theorem \ref{S arithmetic thm}.
\end{rem}

\begin{rem}
 We further verify the need of $\Gamma$ in Theorem $\ref{main_thm}$ to intersect the $\Q$-split maximal torus discretely in the following way. It is straightforward to see that $BS(1,n)$ admits a Zariski dense faithful representation in $\Q \rtimes \Q^*$. We also have that for any set of primes $\{p_1, p_2, \ldots, p_\ell\}$, the group $\Gamma$ given by
$$
\Gamma = \left< a, b_1, \cdots, b_\ell \: | \: b_iab_i^{-1} = a^{p_i}, b_i b_j = b_j b_i \right>
$$
is a Zariski dense subgroup of $\Q \rtimes \Q^*$. In particular, $\Gamma$ is given by
$$
\Gamma = \Z \left[\frac{1}{p_1}, \ldots, \frac{1}{p_\ell} \right] \rtimes \Z^\ell
$$
where the $i$-th coordinate of $\Z^\ell$ acts on $\Z[\frac{1}{p_1}, \ldots \frac{1}{p_\ell}]$ by multiplication by powers of $p_i$. However, it follows that $\Gamma$ has Hirsch length $\ell + 1$ and $BS(1,n)$ has Hirsch length $2$. Hence, $\Gamma$ and $BS(1,n)$ are Zariski dense in $\Q \rtimes \Q^*$ but do not have isomorphic cohomology rings over $\Q$.
\end{rem}


\subsection{Outline} We begin Section \ref{Section: Preliminaries} with notation and basic definitions on the rank of an abelian group and of solvable groups. We also provide standard definitions regarding linear algebraic groups including the Zariski closure, the unipotent radical, and tori as a means to introduce irreducible solvable linear algebraic groups. In Section \ref{Section: Cohom}, we develop rational cohomology and Lie algebra cohomology of a linear algebraic group defined over $\Q$ with an emphasis on their ring structure via the cup product. We also introduce background for spectral sequences in cohomology in Section 3. In Section \ref{Section: Unipotent}, we study cohomology of Zariski dense subgroups of irreducible unipotent $\Q$-defined linear algebraic groups and show that the restriction map induces an isomorphism of cohomology with arbitrary coefficients in this setting. We dedicate Section \ref{Section: Proof of main thms} to the proof of our main Theorems \ref{main_thm} and \ref{resmap_solv}. \newline

\paragraph{\textbf{Acknowledgments.}} The authors would like to thank Peter Kropholler for helpful conversations regarding this work and his comments on previous drafts which have significantly improved the results of this paper.
The second and third author were supported by the National Science Center Grant Maestro-13 UMO-2021/42/A/ST1/00306. The second author was also supported by Fondation Sciences Mathématiques de Paris. We thank the referee for suggestions and for pointing out mistakes in an earlier version of this paper.

\section{Preliminaries}\label{Section: Preliminaries}


\subsection{Solvable groups} We refer the reader to \cite[p. 83]{lennox_robinson, segal}.
Let $A$ be an abelian group. The \emph{torsion-free rank} of $A$ is defined as

\begin{center}
$r_0(A) = \dim_{\Q}(A \otimes_{\Z} \Q).$
\end{center}

Given a prime $p$, the $p$-\emph{rank} of $A$ is given by

\begin{center}
$r_p(A) = \dim_{\F_p}(\Hom(\F_p, A))$
\end{center}
where $\F_p$ is the field with $p$ elements. 

Let $G$ be a group. The \emph{lower central series} of $G$ is defined recursively where the first term is given by $\gamma_1(G) = G$ and the $i$-th term by $\gamma_i(G) = [G, \gamma_{i-1}(G)]$ for $i > 1$. We say that $G$ is a \emph{nilpotent group of step length $c$} if $c$ is the smallest natural number such that $\gamma_{c+1}(G) = \{1\}$. We say that $G$ is a nilpotent group if the step length is unspecified.

The \emph{derived series} of $G$ is defined recursively where the $0$-th term is given by $G^{(0)} = G$ and the $i$-th term by $G^{(i)} = [G^{(i-1)}, G^{(i-1)}]$ for $i > 0$. We say that $G$ is a \emph{solvable group of derived length $d$} if $d$ is the smallest natural number such that $G^{(d+1)} = \{1\}$. When the derived length is unspecified, we simply refer to $G$ as a solvable group. 

We denote the \emph{Fitting subgroup} of $G$ by 
\begin{center}
$\Fitt(G) = \left< N \: : \: N \trianglelefteq G, \: N \text{ nilpotent} \right>.$
\end{center}

\begin{defn}
A solvable group $G$ has \emph{finite abelian ranks} or is a \emph{solvable FAR-group} if it admits a subnormal series $\{G_i\}_{i=0}^k$ of finite length such that each successive quotient $G_{i} / G_{i-1}$ is abelian and $r_p(G_{i}/G_{i-1})$ is finite for $p=0$ or $p$ prime for each $i>0$. The torsion-free rank of $G$ is given by
\begin{center}
$h(G) = \displaystyle\sum_{i=0}^k r_0(G_{i} / G_{i-1}).$
\end{center}
This value is also known as the \emph{Hirsch length} of $G$.
\end{defn}

We say that a solvable group $G$ is \emph{polyrational} if it admits a subnormal series $\{H_{i}\}_{i=0}^k$ of finite length such that each successive quotient $H_{i}/H_{i-1} \leq \Q$.  We refer to the integer $k$ as the \emph{rational series length}. We note that if $G$ is polyrational with a rational series of length $k$, then $G$ has Hirsch length $k$. 


\subsection{Linear algebraic groups}\label{background_linear_algebraic_groups}
We refer the reader to \cite{borel, springer} for more background on linear algebraic groups.

\noindent Let $R$ be a subring of $\Q$. We denote the polynomial ring over $R$ in $\ell$ variables as $R[X_1, \ldots, X_\ell]$. Given a collection of polynomials $\{F_i\}_{i=1}^k$, we denote the variety in $R[X_1, \ldots, X_\ell]$ defined by the polynomials $\{F_i\}_{i=1}^k$ as $V(F_1, \ldots, F_k)$. 

A $\Q$-defined \emph{linear algebraic group} $\textbf{G} \leq \GL(n,\Q)$ is an algebraic variety 

\begin{center}
$\textbf{G} = V(F_1, \ldots, F_m) = \left\{ X \in \Q^{n^2} : F_i(X) = 0, i = 1, \ldots, m\right\}$
\end{center}

which is also a group such that the maps $\mu \colon \textbf{G} \times \textbf{G} \to \textbf{G}$ and $\iota \colon \textbf{G} \to \textbf{G}$ with $\mu(x,y) = xy$ and $i \colon x \to x^{-1}$ are morphisms of $\Q$-varieties. The group of $R$-points of $\textbf{G}$ is given by $\textbf{G}(R) = \textbf{G} \cap \GL(n,R)$. 

We say that $\textbf{G}$ is \emph{irreducible} if there exists polynomials $\mathcal{I}_{\mathbf{G}} = (F_1, \cdots, F_\rho) \subset \Q[x_{11}, \ldots, x_{nn}, \text{Det}^{-1}]$ such that the coordinate ring of $\textbf{G}$ is given by 
\begin{center}
$\Q[\textbf{G}] = \frac{\Q[x_{11}, \ldots, x_{nn}, \text{Det}^{-1}]}{\mathcal{I}_{\textbf{G}} \cap \Q[x_{11}, \ldots, x_{nn}, \text{Det}^{-1}]}$
\end{center}
where $\mathcal{I}_{\textbf{G}} \cap \Q[x_{11}, \ldots, x_{nn}, \text{Det}^{-1}]$ is a prime ideal. We note that for a linear algebraic group, irreducibility is equivalent to connectedness.

A $\Q$-\emph{closed subgroup} $\textbf{H}$ of the $\Q$-defined linear algebraic group $\textbf{G}$ is a subgroup that is closed in the Zariski topology. Let $\textbf{G}$ and $\textbf{H}$ be $\Q$-defined linear algebraic groups. A \emph{$\Q$-morphism of linear algebraic groups} $\varphi \colon \textbf{G} \to \textbf{H}$ is a group homomorphism which is also a morphism of $\Q$-varieties. We note that quotients are defined in the category of $\Q$-defined linear algebraic groups. In particular, if $\textbf{N}$ is a $\Q$-closed normal subgroup of $\textbf{G}$, then $\textbf{G} / \textbf{N}$ is a $\Q$-defined linear algebraic group where the natural projection $\textbf{G} \to \textbf{G} / \textbf{N}$ is a $\Q$-defined homomorphism.

Given a subgroup $H \leq \textbf{G}(\Q)$, we denote its Zariski closure as $\overline{H}$ which is the smallest $\Q$-defined linear algebraic group defined over $\Q$ which contains $H$. Given closed subgroups $X,Y \leq \textbf{G}$, if $X$ normalizes $Y$, then $\overline{X}$ normalizes $\overline{Y}$, and in particular, it follows that $\overline{[X,Y]} = [\overline{X}, \overline{Y}]$. Subsequently,  $\gamma_i(\overline{H}) = \overline{\gamma_i(H)}$ and $\overline{H^{(i)}} = (\overline{H})^{(i)}$. Thus, if $H$ is nilpotent of step length $c$, then $\overline{H}$ is nilpotent of step length $c$, and if $H$ is solvable of derived length $\ell$, then $\overline{H}$ is solvable of derived length $\ell$.

The \emph{unipotent radical} $\textbf{U}(\textbf{G})$ is defined as the set of unipotent elements of the maximal closed connected normal solvable subgroup of $\textbf{G}$ which is a Zariski closed subgroup defined over $\Q$. We are most concerned with linear algebraic groups that are virtually solvable, and in this context, the subgroup $\textbf{U}(\textbf{G})$ is the set of unipotent elements of $\textbf{G}$. Any surjective $\Q$-defined homomorphism $\varphi \colon \textbf{G} \to \textbf{H}$ of $\Q$-defined groups maps unipotent elements to unipotent elements, and subsequently, $\varphi(\textbf{U}(\textbf{G})) = \textbf{U}(\textbf{H})$. 

We denote $\G^\Q_a = \Q$ to be $\Q$ viewed as a linear algebraic group with its additive group structure and $\G_m^\Q = \Q^*$ with its multiplicative structure. Then, a \emph{torus} is a linear algebraic group which is isomorphic to a closed subgroup of $(\G_m^\Q)^k$ for some $k$.

We are most concerned when $\G$ is an irreducible solvable $\Q$-defined linear algebraic group. It follows that $\G$ can be written as the semidirect product 

\begin{center}
$\G = \U \rtimes \T$
\end{center}

where $\U$ is the $\Q$-defined unipotent radical and $\T$ is a $\Q$-split maximal torus.



\section{Rational cohomology and Lie algebra cohomology of linear algebraic groups defined over $\Q$} \label{Section: Cohom}

We dedicate this section to the rational cohomology of $\Q$-defined linear algebraic groups and Lie algebra cohomology of $\Q$-defined Lie algebras.

\subsection{Cohomology of rational $\G$-modules}
Let $\G$ be a $\Q$-defined linear algebraic group. For a finite dimensional $\Q$-vector space $V$, we say that $V$ is a \emph{rational} $\G$-\emph{module} if it admits a $\Q[\textbf{G}]$-comodule structure via the $\Q$-linear map $\delta_V \colon V \to V \otimes_{\Q} \Q[\textbf{G}]$. It follows that we have a cochain complex $C^*(\Q[\textbf{G}], V)$ defined by
\begin{center}
$C^n(\Q[\textbf{G}],V) = V \otimes (\bigotimes_{i=1}^n \Q[\textbf{G}])$,
\end{center}
with codifferential
\begin{center}
$\partial^n = \displaystyle\sum_{i=0}^{n+1}(-1)^i \partial_i^n$
\end{center}

where
\begin{eqnarray*}
\partial_0^n(v \otimes f_1 \otimes \cdots \otimes f_n) &=& \delta_V(v) \otimes f_1 \cdots \otimes f_n,\\
\partial_i^n(v \otimes f_1  \otimes \cdots \otimes f_n) &=& v \otimes f_1 \otimes \cdots \otimes \delta f_i \otimes  \cdots \otimes f_n, \quad 1 \leq i \leq n,\\
\partial_{n+1}^n(v \otimes f_1 \otimes \cdots \otimes f_n) &=&  v \otimes f_1 \otimes \cdots \otimes f_n \otimes 1.
\end{eqnarray*}
The \emph{rational cohomology groups} of $\textbf{G}$ with coefficients in $V$ are then the cohomology groups of the complex $C^*(\Q[\textbf{G}],V):$

\begin{center}
$H^n(\textbf{G},V) = \ker (\partial^{n}) / \text{Im} (\partial^{n-1})$
\end{center}
for $n \geq 1$.
For $n = 0$, we define $H^0(\textbf{G},V) = \ker (\partial^{0})$; this is the subspace of all $v \in V$ such that $\Delta_V(v) = v \otimes 1,$ i.e.

\begin{center}
$H^0(\textbf{G},V) = V^{\textbf{G}}.$
\end{center}

We note that the groups $H^n(\textbf{G},V)$ are the derived functors of the functor $V \to V^{\textbf{G}}$ in the category of $\Q[\textbf{G}]$-comodules.

When $V = \Q$ is the trivial $\Q[\G]$-comodule, we have

\begin{center}
$C^0(\Q[\textbf{G}], \Q) = \Q \quad \text{ and } \quad C^n(\Q[\textbf{G}], \Q) = \Q[\textbf{G}]^{\otimes_n}$
\end{center}

for $n > 0$. For a finite dimensional rational $\textbf{G}$-module $M$, the cup product

\begin{center}
$H^i(\textbf{G}, M) \otimes H^j(\textbf{G}, \Q) \overset{\cup}{\longrightarrow} H^{i+j}(\textbf{G}, M)$
\end{center}

is defined on the cochain level as follows. For 
\begin{center}
    $\varphi = v \otimes f_1 \otimes \cdots \otimes f_i \in C^i(\Q[\textbf{G}], M)$ 
    \end{center}
    and 
    \begin{center}
    $\psi  =g_1  \otimes \cdots \otimes g_j \in C^j(\Q[\textbf{G}], \Q)$, 
\end{center}
we have
\begin{center}
$\varphi \cup \psi = \varphi \otimes \psi = v \otimes f_1 \otimes \cdots \otimes f_i \otimes g_1 \otimes \cdots \otimes g_j \in C^{i+j}(\Q[\textbf{G}], M).$
\end{center}

The total differential is given by

\begin{center}
$d(\varphi \otimes \psi) = d\varphi \otimes \psi + (-1)^{\text{deg}(\varphi)}\varphi \otimes d\psi$,
\end{center}

and it can be easily checked that $\cup$ defines a map of complexes

\begin{center}
$C^*(\Q[\textbf{G}], M) \otimes C^*(\Q[\textbf{G}], \Q)  \overset{\cup}{\longrightarrow} C^*(\Q[\textbf{G}], M).$ 
\end{center}

By the K\"{u}nneth theorem, we then have the cohomology pairing

\begin{center}
$H^i(\textbf{G}, M) \otimes H^j(\textbf{G}, \Q) \overset{\cup}\longrightarrow H^{i+j}(\textbf{G}, M).$
\end{center}

Let $\Gamma \leq \textbf{G}(\Q)$ be a subgroup of the $\Q$-points of $\G$. Given the inclusion $\Gamma \to \G(\Q)$ and a finite dimensional rational $\G$-module $M$, the \emph{restriction map} of cochain complexes

\begin{center}
$r_M \colon C^*(\Q[\textbf{G}], M) \longrightarrow C^*(\Gamma, M)$
\end{center}

to be defined as follows. For $\varphi = v \otimes f_1 \otimes \cdots \otimes f_n \in C^n(\Q[\textbf{G}], M),$ we define

\begin{center}
$r_M(\varphi)(\gamma_1, \cdots, \gamma_n) = f_1(\gamma_1) \cdots f_n(\gamma_n) v \in M.$
\end{center}

It follows that this map of complexes induces a restriction map of cohomology groups

\begin{center}
$r_M: H^n(\textbf{G},M) \to H^n(\Gamma,M).$
\end{center}

We also have that the cup product $H^i(\Gamma, M) \otimes H^j(\Gamma, \Q) \overset{\cup}{\longrightarrow} H^{i+j}(\Gamma, M)$ is defined on the cochain level in the following way. If $\varphi \in C^i(\Gamma, M)$ and $\psi \in C^j(\Gamma, \Q)$, then

\begin{center}
$\varphi \cup \psi (\gamma_1, \ldots, \gamma_i, \gamma_{i+1}, \ldots, \gamma_{i+j}) = \varphi(\gamma_1, \ldots, \gamma_i) \cdot \psi(\gamma_{i+1}, \ldots, \gamma_{i+j}).$
\end{center}

An immediate consequence of the above definitions is that the restriction map $r_M$ preserves cup products such that the following diagram commutes:

\begin{equation}\label{D1}
\begin{tikzcd}
H^i(\G, M) \otimes H^j(\G, \Q) \ar[d, "r_M \otimes r_\Q"] \ar[r, "\cup"] & H^{i+j}(\G,M) \ar[d, "r_M"]  \\
H^i(\Gamma, M) \otimes H^j(\Gamma, \Q) \ar[r, "\cup"]  & H^{i+j}(\Gamma, M)
\end{tikzcd}.
\end{equation}

\subsection{Cohomology of $\mathfrak{g}_{\Q}$-modules}
Given a finite dimensional Lie algebra $\mathfrak{g}_\Q$ over the rationals $\Q$, a finite dimensional $\Q$-vector space $V$ is a $\mathfrak{g}_\Q$-module if there exists a Lie algebra homomorphism $\mathfrak{g}_\Q \to \text{End}_\Q(V)$ where $\text{End}_\Q(V)$ is the Lie algebra of $\Q$-defined endomorphisms of $V$ with Lie bracket given by $[A,B] = AB - BA$. If $M$ is a finite dimensional rational $\textbf{G}$-module, we can consider $M$ as a $\mathfrak{g}_{\Q}$-module, where $\mathfrak{g}_{\Q}$ is the $\Q$-defined Lie algebra of $\textbf{G}$, by taking the differential of the associated representation. We will only consider $\mathfrak{g}_{\Q}$-modules derived from rational representations.

Given an ideal $\mathfrak{h}$ in $\mathfrak{g}_{\Q}$, we denote $C^n(\mathfrak{h},M)$ as the vector space of $n$-linear alternating functions on $\mathfrak{h}$ with values in $M$. We give a $\mathfrak{g}_{\Q}$-module structure on $C^n(\mathfrak{h},M)$ in the following way. 

For $n=0$, $C^0(\mathfrak{h},M) = M$ is trivially a $\mathfrak{g}_{\Q}$-module. For $n>0$, $\varphi \in C^n(\mathfrak{h},M)$ and $X \in \mathfrak{g}_{\Q}$, we define

\begin{center}
$(X \cdot \varphi)(Y_1, \ldots, Y_n) = X \cdot \varphi(Y_1, \ldots, Y_n) - \displaystyle\sum_{i=1}^n \varphi(Y_1, \ldots, Y_{i-1}, [X,Y_i], Y_{i+1}, \ldots, Y_n).$
\end{center}

The differential in $C^*(\mathfrak{h},M)$ is a $\mathfrak{g}_{\Q}$-module map such that

\begin{center}
$d(X \cdot \varphi) = X \cdot (d\varphi).$
\end{center}

It follows that the $\mathfrak{g}_{\Q}$-action on cochains induces a $\mathfrak{g}_{\Q}$-module structure on the cohomology groups $H^i(\mathfrak{h},M)$. It then follows that $\mathfrak{h}$ admits a trivial action on every cohomology group $H^n(\mathfrak{h},M).$ 

Given $\mathfrak{g}_{\Q}$-modules $M,N,$ and $P$, a \emph{pairing} from $M$ and $N$ to $P$ is a $\Q$-bilinear map $(m,n) \to m \cup n$ of $M \times N \to P$ such that for all $X \in \mathfrak{g}_{\Q}$, we have

\begin{center}
$X \cdot (m \cup n) = (X \cdot m) \cup n = m \cup (X \cdot n).$
\end{center}

For an ideal $\mathfrak{h}$ in $\mathfrak{g}_{\Q}$, we will define a pairing of cohains

\begin{center}
$C^i(\mathfrak{h},M) \otimes C^j(\mathfrak{h},N) \overset{\cup}{\longrightarrow} C^{i+j}(\mathfrak{h},P)$
\end{center}

as follows. For $\varphi \in C^i(\mathfrak{h},M)$ and $\psi \in C^j(\mathfrak{h},N),$ we have

\begin{center}
$\varphi \cup \psi(Y_1, \ldots, Y_{i+j}) = \displaystyle\sum_{\sigma \text{ is a } (i,j)- \text{shuffle}} (-1)^{\text{sign}(\sigma)} \varphi(Y_{\sigma(1)}, \ldots, Y_{\sigma(i)}) \cup \psi(Y_{\sigma(i+1)}, \ldots, Y_{\sigma(i+j)})$
\end{center}

where a permutation $\sigma \in \text{Sym}(i+j)$ is an $(i,j)$-shuffle if $\sigma(1) < \sigma(2) < \cdots \sigma(i)$ and $\sigma(i+1) < \sigma(i+2) < \cdots \sigma(i+j).$ We then have

\begin{center}
$X \cdot ( \varphi \cup \psi) = (X \cdot \varphi) \cup \psi + \varphi \cup (X \cdot \psi)$
\end{center}

for all $X \in \mathfrak{g}_{\Q}.$ Additionally,

\begin{center}
$d(\varphi \cup \psi) = d \varphi \cup \psi (-1)^{\text{Deg}(\varphi)}\varphi \cup d\psi.$
\end{center}

It then follows that the pairing $C^i(\mathfrak{h},M) \otimes C^j(\mathfrak{h},N) \overset{\cup}{\longrightarrow} C^{i+j}(\mathfrak{h},P)$ induces a cohomology pairing 
    
 \begin{center}
$H^i(\mathfrak{h},M) \otimes H^j(\mathfrak{h},N) \overset{\cup}{\longrightarrow} H^{i+j}(\mathfrak{h},P).$
   \end{center}
   
By properties of the cup product, there exists an induced pairing on the $\mathfrak{g}_{\Q}$-annihilated elements

\begin{center}
$H^i(\mathfrak{h},M)^{\mathfrak{g}_{\Q}} \otimes H^j(\mathfrak{h},N)^{\mathfrak{g}_{\Q}} \overset{\cup}{\longrightarrow} H^{i+j}(\mathfrak{h},P)^{\mathfrak{g}_{\Q}}.$
\end{center}

If $N = \Q$ is the trivial $\mathfrak{g}_{\Q}$-module with pairing $M \times \Q \overset{\cup}{\rightarrow}  M$ given by $(v, \alpha) \to \alpha v,$ we then have the usual cup product:

\begin{center}
$H^i(\mathfrak{h},M) \otimes H^j(\mathfrak{h},\Q) \overset{\cup}{\longrightarrow} H^{i+j}(\mathfrak{h},M).$
\end{center}

By \cite[Theorem 13]{Lie_cohomoloby}, we have the following theorem.
\begin{thm}{\label{T31}}
    Let $\mathfrak{g}_\Q$ be a finite dimensional Lie algebra over $\Q$, and let $M$ be a finite dimensional $\mathfrak{g}_\Q$--module. Suppose that $\mathfrak{u}$ is an ideal of $\mathfrak{g}_\Q$ such that $\mathfrak{t} = \mathfrak{g} / \mathfrak{u}$ is semisimple. Then for all $n \geq 0$, we have the following isomorphism
    
    \begin{center}
    $H^n(\mathfrak{g}_\Q,M) \cong \displaystyle\bigoplus_{i+j= n} H^i(\mathfrak{u}, M)^{\mathfrak{g}_\Q} \otimes H^j(\mathfrak{t}, \Q)$
    \end{center}
    
which is multiplicative for paired modules.
\end{thm}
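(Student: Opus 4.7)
The plan is to deduce this from the Hochschild--Serre spectral sequence associated to the ideal $\mathfrak{u} \triangleleft \mathfrak{g}$, together with two classical inputs that are specific to semisimplicity in characteristic zero: Levi's theorem and Whitehead's vanishing. First I would set up the multiplicative first-quadrant spectral sequence
\begin{equation*}
E_2^{p,q} = H^p(\mathfrak{t}, H^q(\mathfrak{u}, M)) \; \Longrightarrow \; H^{p+q}(\mathfrak{g}, M),
\end{equation*}
obtained by filtering the Chevalley--Eilenberg complex $C^*(\mathfrak{g}, M)$ by the degree in $\mathfrak{g}/\mathfrak{u} \cong \mathfrak{t}$. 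The $\mathfrak{g}$-action on $H^q(\mathfrak{u}, M)$ factors through $\mathfrak{t}$ because $\mathfrak{u}$ acts trivially on its own cohomology, so $H^q(\mathfrak{u}, M)$ is a legitimate finite-dimensional $\mathfrak{t}$-module and $H^q(\mathfrak{u}, M)^{\mathfrak{t}} = H^q(\mathfrak{u}, M)^{\mathfrak{g}}$.

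Next I would simplify the $E_2$-page using semisimplicity. By complete reducibility, every finite-dimensional $\mathfrak{t}$-module $V$ splits as $V = V^{\mathfrak{t}} \oplus V'$, where $V'$ is a sum of non-trivial irreducibles. A standard Casimir argument shows $H^p(\mathfrak{t}, W) = 0$ for every $p$ and every non-trivial finite-dimensional irreducible $W$, so $H^p(\mathfrak{t}, V) \cong H^p(\mathfrak{t}, V^{\mathfrak{t}}) \cong H^p(\mathfrak{t}, k) \otimes V^{\mathfrak{t}}$. Applied to $V = H^q(\mathfrak{u}, M)$ this gives
\begin{equation*}
E_2^{p,q} \;\cong\; H^p(\mathfrak{t}, k) \otimes H^q(\mathfrak{u}, M)^{\mathfrak{g}},
\end{equation*}
which is already the right-hand side of the asserted decomposition when one sums along $p+q = n$.

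The main obstacle is to show that the spectral sequence degenerates at $E_2$ and that the corresponding filtration of $H^n(\mathfrak{g}, M)$ splits compatibly with the cup product. Here I would invoke Levi's theorem: since $\mathrm{char}(k) = 0$ and $\mathfrak{t}$ is semisimple, the short exact sequence $0 \to \mathfrak{u} \to \mathfrak{g} \to \mathfrak{t} \to 0$ splits, giving a Lie algebra embedding $\iota \colon \mathfrak{t} \hookrightarrow \mathfrak{g}$ and a retraction $\pi \colon \mathfrak{g} \to \mathfrak{t}$ with $\pi \circ \iota = \mathrm{id}$. On cohomology, $\pi^*$ provides a ring map $H^*(\mathfrak{t}, k) \to H^*(\mathfrak{g}, k)$ that lifts the edge map $E_2^{*,0} \twoheadrightarrow E_\infty^{*,0}$; this kills all differentials leaving the bottom row. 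Dually, the restriction of cocycles to $\wedge^* \mathfrak{u}$ combined with the fact that $\mathfrak{t}$ acts completely reducibly on $C^*(\mathfrak{u}, M)$ lets one exhibit the edge map $H^q(\mathfrak{g}, M) \to H^q(\mathfrak{u}, M)^{\mathfrak{g}}$ as a surjection, so the fiber column also survives. A standard bookkeeping argument (using that the page $E_2^{p,q}$ is already a tensor product and that cup product with $H^*(\mathfrak{t}, k)$ commutes with differentials) then forces all $d_r$ for $r \geq 2$ to vanish and produces the ring isomorphism.

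Finally, I would verify the multiplicativity claim by noting that the Hochschild--Serre spectral sequence is a spectral sequence of graded-commutative algebras: the product on $E_2$ is the tensor product of the cup products on $H^*(\mathfrak{t}, k)$ and on $H^*(\mathfrak{u}, M)^{\mathfrak{g}}$, and this product coincides, at $E_\infty$, with the associated graded of the cup product on $H^*(\mathfrak{g}, M)$. Since the filtration splits by the argument above, the isomorphism is multiplicative for paired modules, which completes the proof.
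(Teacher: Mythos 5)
The paper does not actually prove this statement: Theorem \ref{T31} is quoted verbatim from Hochschild--Serre (cited as \cite[Theorem 13]{Lie_cohomoloby}), so there is no in-paper argument to compare against. Your proposal is, in effect, a reconstruction of the classical Hochschild--Serre proof, and its skeleton is sound: the Hochschild--Serre spectral sequence for $\mathfrak{u}\triangleleft\mathfrak{g}$, the reduction $H^p(\mathfrak{t},V)\cong H^p(\mathfrak{t},k)\otimes V^{\mathfrak{t}}$ via complete reducibility and Whitehead vanishing, the existence of a Levi-type section $\mathfrak{t}\hookrightarrow\mathfrak{g}$ (note that Levi's theorem splits $\mathfrak{g}$ over its radical, not over an arbitrary ideal; you need the extra observation that $\mathrm{rad}(\mathfrak{g})\subseteq\mathfrak{u}$ and that ideals of semisimple algebras are direct summands), and degeneration at $E_2$ forced by multiplicativity once the bottom row and left column consist of permanent cocycles. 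The surjectivity of $H^q(\mathfrak{g},M)\to H^q(\mathfrak{u},M)^{\mathfrak{g}}$ is correctly identified as the crux; the standard way to carry out your sketch is to represent a class by an $\mathfrak{s}$-invariant cocycle (possible because taking $\mathfrak{s}$-invariants of the finite-dimensional complex $C^*(\mathfrak{u},M)$ is exact) and extend it by zero on the Levi complement, checking directly that the extension is closed.

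The one step you should tighten is the last one. Knowing that the filtration on $H^n(\mathfrak{g},M)$ splits as a vector space and that the associated graded ring is $\bigoplus_{p+q=n}H^p(\mathfrak{t},k)\otimes H^q(\mathfrak{u},M)^{\mathfrak{g}}$ does \emph{not} by itself give a ring isomorphism with $H^*(\mathfrak{g},M)$; one needs a \emph{multiplicative} splitting. This is available from the ingredients you already have: the assignment $\alpha\otimes\beta\mapsto \pi^*\alpha\cup\sigma(\beta)$, where $\sigma(\beta)$ is the extension-by-zero of an $\mathfrak{s}$-invariant representative, is well defined (coboundaries can also be chosen $\mathfrak{s}$-invariantly), is compatible with cup products and with arbitrary pairings $M\times N\to P$, and induces the identity on the associated graded, hence is a ring isomorphism. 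With that addition your argument is complete and agrees with the proof in the cited source.
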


The multiplicative structure of the isomorphism of Theorem \ref{T31} can be viewed as follows.

A given pairing of $\mathfrak{g}$-modules $M \times N \overset{\cup}{\rightarrow} P$ induces a cohomology pairing 

\begin{center}
$H^i(\mathfrak{g},M) \otimes H^j(\mathfrak{g},N) \overset{\cup}{\longrightarrow} H^{i+j}(\mathfrak{g},P), \quad i,j \geq 0.$
\end{center}

By Theorem \ref{T31}, we then have 
\begin{eqnarray*}
H^i(\mathfrak{g}_\Q,M) &\cong& \bigoplus_{k+\ell = i} H^k(\mathfrak{u},M)^{\mathfrak{g}_\Q} \otimes H^\ell(\mathfrak{t},\Q),\\
H^j(\mathfrak{g}_\Q,N) &\cong& \bigoplus_{p+q = j} H^p(\mathfrak{u},N)^{\mathfrak{g}_\Q} \otimes H^q(\mathfrak{t},\Q).
\end{eqnarray*}

Given $u_1 \otimes v_1 \in H^i(\mathfrak{g}_\Q,M)$ and $u_2 \otimes v_2 \in H^j(\mathfrak{g},N),$ we have that

\begin{center}
$(u_1 \otimes v_1) \cup (u_2 \otimes v_2) = (-1)^{\ell p}(u_1 \cup_1 u_2) \otimes (v_1 \cup_2 v_2) \in H^{i+j}(\mathfrak{g}_\Q,P)$
\end{center}

where
\begin{center}
   $ H^*(\mathfrak{t},\Q) \otimes H^*(\mathfrak{t}, \Q) \overset{\cup_1}{\longrightarrow} H^*(\mathfrak{t}, \Q)$,
\end{center}
and
\begin{center}
$H^*(\mathfrak{u},M)^{\mathfrak{g}_\Q} \otimes H^*(\mathfrak{u},N)^{\mathfrak{g}_\Q} \overset{\cup_2}{\longrightarrow} H^*(\mathfrak{u},P)^{\mathfrak{g}_\Q}$
\end{center}

are the other pairings used. Therefore, there exists a decomposition of the multiplicative structure in 
$H^*(\mathfrak{g}_\Q,\Q)$ where $\Q$ is a trivial $\mathfrak{g}_\Q$-module. If $\alpha \in H^i(\mathfrak{t},\Q)$ such that 

\begin{center}
$\alpha \otimes 1 \in H^i(\mathfrak{t},k) \otimes H^0(\mathfrak{u},\Q)^{\mathfrak{g}} = H^i(\mathfrak{t},\Q) \otimes \Q$
\end{center}

and $\beta \in H^j(\mathfrak{u},\Q)^{\mathfrak{g}_\Q}$ such that

\begin{center}
$\beta \otimes 1 \in H^0(\mathfrak{u},\Q)^{\mathfrak{g}_\Q} \otimes H^j(\mathfrak{u},\Q)^{\mathfrak{g}_\Q},$
\end{center}
then for the pairing $\Q \times \Q \overset{\cup}{\longrightarrow} \Q$ induced by multiplication, we have

\begin{center}
$(\alpha \otimes 1) \cup (1 \otimes \beta) = \alpha \otimes \beta \in H^{i+j}(\mathfrak{g},\Q).$
\end{center}

\subsection{Cohomology spectral sequences} \label{Section: Spectral sequences}
We provide the necessary background on cohomology spectral sequences and refer the reader to \cite{Weibel}. We begin with the following definition.

\begin{defn}
A cohomology spectral sequence in an abelian category $\mathcal{A}$  is given by ${(E_r^{pq}, d_r^{pq})}_{r \geq 0}$ where each $E_r^{pq}$ is an object of $\mathcal{A}$ and $d_r^{pq}: E_r^{pq} \to E_r^{p+r, q-r+1}$ are morphisms in $\mathcal{A}$ satisfying $d_r^{p+r, q-r+1} \circ d_r^{pq} =0$ and where $$E_{r+1}^{pq} = \textrm{Ker}(d_r^{pq})/\textrm{Im}(d_r^{p-r, q+r-1}).$$
\end{defn}
We are most concerned with first quadrant cohomology spectral sequences which are those in which $E_r^{pq} = 0$ unless $p \geq 0$, $q \geq 0$. In particular, we will be considering pairs of first quadrant cohomology spectral sequences $E, E'$ such that there exists a morphism between them. We have the following definition.

\begin{defn}A morphism of cohomology spectral sequences $f: E \to E'$ is a family of maps $f_r^{pq}: E_r^{pq} \to {E'_r}^{pq}$ in $\mathcal{A}$ with $(d')_r^{pq} \circ f_r^{pq} = f_r^{p+r, q-r+1} \circ d_r^{pq}$ such that each $f_{r+1}^{pq}$ is the map induced by $f_r^{pq}$.
\end{defn}

We have the following lemma which is a consequence of the 5-lemma.

\begin{lemma}{(Mapping Lemma, \cite{Weibel}, p. 123)}
Let $f \colon E \to E'$ be a morphism of spectral sequences such that for some fixed $r$, $f_r^{pq} \colon E_r^{pq} \to {E'_r}^{pq}$ is an isomorphism for all $p$ and $q$. Then $f_s^{pq} \colon E_s^{pq} \to {E'_s}^{pq}$ is an isomorphism for all $s \geq r.$
\end{lemma}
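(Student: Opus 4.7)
The plan is to induct on $s \geq r$. The base case $s = r$ is precisely the hypothesis. For the inductive step, I assume that $f_s^{pq}$ is an isomorphism for every pair $(p,q)$ and aim to deduce the same for $f_{s+1}^{pq}$.

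The only structural fact I need is the definition
\[ E_{s+1}^{pq} = \ker(d_s^{pq})\big/\operatorname{Im}(d_s^{p-s,\, q+s-1}), \]
together with the compatibility relation $(d')_s^{pq} \circ f_s^{pq} = f_s^{p+s,\, q-s+1} \circ d_s^{pq}$ inherent in being a morphism of spectral sequences. These two ingredients produce a commutative ladder whose rows are the length-two chain complexes
\[ E_s^{p-s,\, q+s-1} \xrightarrow{d_s} E_s^{p,q} \xrightarrow{d_s} E_s^{p+s,\, q-s+1} \]
and its primed analogue, with the three vertical maps given by $f_s$. By the inductive hypothesis, all three vertical maps are isomorphisms.

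From this ladder I will conclude, by a routine diagram chase (or two applications of the Five Lemma to the canonical short exact sequences $0 \to \ker(d_s) \to E_s \to \operatorname{Im}(d_s) \to 0$ on each row), that $f_s$ restricts to an isomorphism $\ker(d_s^{pq}) \xrightarrow{\cong} \ker((d')_s^{pq})$ and also sends $\operatorname{Im}(d_s^{p-s,\, q+s-1})$ isomorphically onto $\operatorname{Im}((d')_s^{p-s,\, q+s-1})$. Passing to the quotient then yields the desired isomorphism $f_{s+1}^{pq} \colon E_{s+1}^{pq} \to (E')_{s+1}^{pq}$, and induction completes the proof for all $s \geq r$. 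There is no substantive obstacle here: the real content is that a morphism between chain complexes whose component maps are all isomorphisms induces isomorphisms on homology, and the rest is simply bookkeeping on the page index.
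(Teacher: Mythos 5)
Your proof is correct, and it follows essentially the route the paper intends: the paper gives no proof of its own, simply citing Weibel (p.~123) and noting the statement is a consequence of the 5-lemma, which is exactly the inductive homology-of-complexes argument you spell out. The only observation worth adding is that once all three vertical maps in your ladder are isomorphisms, $f_s$ is an isomorphism of two-term complexes, so $f_s^{-1}$ is automatically a chain map and the induced map on homology is immediately invertible; the explicit kernel/image bookkeeping is not strictly needed.
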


Additionally, we say that $E_r^{pq}$ admits a $\emph{multiplicative structure}$ if we have a morphism of spectral sequences 

\begin{center}
$E_r^{pq} \times E_r^{p'q'} \to E_r^{p+p',q+q'}$ 
\end{center}

such that the differentials $d_r$ satisfy

\begin{center}
$d_r(xy) = d_r(x)y + (-1)^pxd_r(y)$ 
\end{center}

for $x \in E_r^{pq}, y \in E_r^{p'q'}$. We note that if there is a multiplicative structure for a given $r$, it induces multiplicative structures on $E_s^{pq}$ for $s \geq r$. 

We now turn to the way in which cohomology spectral sequences arise. We begin with a cochain complex $C$. A $\emph{filtration}$ on $C$ is an ordered family of chain subcomplexes 

\begin{center}
$... \subseteq F^{p-1}(C) \subseteq F^p(C) \subseteq ...$
\end{center}

which give rise to a spectral sequence beginning with $E_0^{pq} = F^p(C^{p+q})/F^{p+1}(C^{p+q})$ and $E_1^{pq} = H^{p+q}(E_0^{p*})$. 

We further have that a filtration on $C$ induces a filtration on the cohomology of $C$ such that $F^p(H^n(C))$ is the image of the map $H^n(F^p(C)) \to H^n(C)$:

\begin{center}
$... \subseteq F^{p+1}(H^n) \subseteq F^p(H^n) \subseteq F^{p-1}(H^n) \subseteq ...$
\end{center}

We say a filtration on $C$ is \emph{bounded} if for each $n$, there are integers $s < t$ such that $F_s(C_n) = 0$ and $F_t(C_n) = C_n$, and $\emph{canonically bounded}$ if $F_{-1}(C)=0$ and $F_n(C_n)=C_n$ for each $n$. If the filtration on $C$ is bounded, then the induced filtration on $H^n$ is bounded and the spectral sequence is bounded. In this case, the spectral sequence admits the following convergence properties. 

\begin{defn}
We say that a bounded cohomology spectral sequence $\emph{converges}$ to $H^*$ if we are given a family of objects $H^n$ of $\mathcal{A}$ such that each $H^n$ admits a finite filtration 
 
\begin{center}
$0 = F^s(H^n) \subseteq \ldots \subseteq F^{p+1}(H^n) \subseteq F^p(H^n) \subseteq F^{p-1}(H^n) \subseteq \ldots \subseteq F^t(H^n) = H^n$;
\end{center}
 Furthermore, the morphisms $\beta^{pq}: E_{\infty}^{pq} \to F^p(H^{p+q}(C))/F^{p+1}(H^{p+q}(C))$ are isomorphisms. 
\end{defn}

We note that every canonically bounded filtration on $C$ gives rise to a first quadrant spectral sequence converging to $H^*$. Hence, it follows that $H^n$ has a finite filtration

\begin{center}
$0 = F^{n+1}(H^n) \subseteq F^n(H^n) \subseteq \ldots \subseteq F^{1}(H^n) \subseteq F^0(H^n) = H^n$,
\end{center}

and again, we have that $\beta^{pq}: E_{\infty}^{pq} \to F^p(H^{p+q}(C))/F^{p+1}(H^{p+q}(C))$ are isomorphisms. 

If $\{E_r^{pq}\}$ and $\{{E'_r}^{pq}\}$ converge to $H^*$ and $H'^{*}$, respectively, we say that a map $h: H^* \to H'^*$ is \emph{compatible} with a morphism $f: E \to E'$ of spectral sequences if $h$ maps $F^p(H^n) \to F^p(H'^n)$ and the associated maps 
$$
F^p(H^n)/F^{p+1}(H^n) \to F^p(H'^n)/F^{p+1}(H'^n)
$$ correspond under $\beta$ and $\beta'$ to $f_{\infty}^{pq}: E_{\infty}^{pq} \to {E'_{\infty}}^{pq} (q=n-p)$.

We have the following theorem. 

\begin{thm}{(Classical Convergence Theorem, \cite{Weibel}, p. 135)}{\label{convthm}} Suppose that the filtration on $C$ is bounded. Then, the spectral sequence is bounded and converges to $H^*(C)$:

\begin{center}
$E_1^{pq} = H^{p+q}(F^p(C)/F^{p+1}(C)) \Rightarrow H^{p+q}(C)$.
\end{center}

Moreover, the convergence is natural in the sense that if $f: C \to C'$ is a map of filtered complexes, then the map $f^*: H^*(C) \to H^*(C')$ is compatible with the corresponding map of spectral sequences. 
\end{thm} 

\subsubsection{Spectral sequences in group and Lie algebraic cohomology} 

We now discuss the spectral sequences that converge to $H^*(\mathfrak{g}_\Q, M)$ and $H^*(\Gamma, M).$
We begin with the following spectral sequence in group cohomology. 

\begin{thm}{(Lyndon-Hochschild-Serre, \cite{Weibel}, p. 195)} Let $G$ be a group with a $G$-module $M$. For any group extension

\begin{center}
$1 \to H \to G \to G/H \to 1$,
\end{center}
there is a first quadrant spectral sequence given by

\begin{center}
$E_2^{pq} = H^p(G/H, H^q(H,M)) \Rightarrow H^{p+q}(G,M)$.
\end{center}
\end{thm}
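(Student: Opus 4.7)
The plan is to realize this as a Grothendieck spectral sequence for the composition of two derived functors. Since $H$ is normal in $G$, for any $G$-module $N$ the $H$-invariants $N^H$ carry a natural $G/H$-module structure, defining a left-exact functor $F = (-)^H$ from the category of $G$-modules to the category of $G/H$-modules. Composing with the left-exact functor $F' = (-)^{G/H}$ from $G/H$-modules to abelian groups produces the $G$-invariants functor, so that $(-)^G = F' \circ F$. By definition, the right derived functors of $F$, $F'$, and $F' \circ F$ evaluated on $M$ are $H^q(H, M)$ (as a $G/H$-module), $H^p(G/H, -)$, and $H^{p+q}(G, M)$ respectively.

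The key hypothesis required to invoke the Grothendieck spectral sequence is that $F$ carries injective $G$-modules to $F'$-acyclic $G/H$-modules. I would prove the stronger statement that $F$ sends injectives to injectives. The standard argument proceeds via coinduced modules: any $G$-module embeds functorially into $\mathrm{Coind}_1^G(A) := \Hom_{\Z}(\Z[G], A)$ for a suitable abelian group $A$, and a direct computation using the normality of $H$ yields a natural isomorphism
\[
\Hom_{\Z}(\Z[G], A)^H \;\cong\; \Hom_{\Z}(\Z[G/H], A) = \mathrm{Coind}_1^{G/H}(A)
\]
of $G/H$-modules, whose right-hand side is injective. Any injective $G$-module $I$ is a direct summand of such a coinduced module, so $I^H$ is a direct summand of an injective $G/H$-module and is therefore itself injective, hence $F'$-acyclic.

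With these ingredients the Grothendieck spectral sequence immediately produces the first-quadrant convergent spectral sequence
\[
E_2^{pq} = H^p\bigl(G/H,\, H^q(H, M)\bigr) \Longrightarrow H^{p+q}(G, M).
\]
The main obstacle is the identification of $H$-invariants of coinduced modules as coinduced $G/H$-modules and the verification that this identification respects the $G/H$-action. This is precisely the step that requires $H$ to be normal in $G$, since otherwise $N^H$ would not even carry a well-defined $G/H$-module structure, and it must be carried out with care to see that the two commuting $H$-actions on $\Hom_{\Z}(\Z[G], A)$ coming from left and right multiplication are correctly accounted for. Boundedness and convergence of the spectral sequence then follow formally from the Cartan-Eilenberg resolution construction underlying the Grothendieck spectral sequence, as developed in Chapter 5 of Weibel.
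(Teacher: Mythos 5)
Your proposal is correct and is essentially the standard argument: the paper does not prove this statement itself but cites it to Weibel, whose proof is exactly the Grothendieck spectral sequence for the composite $(-)^G = \left((-)^{G/H}\right) \circ \left((-)^H\right)$, with the coinduced-module computation showing $(-)^H$ preserves injectives. The only point worth making explicit is that the same computation also identifies $R^q\bigl((-)^H\bigr)(M)$ with the group cohomology $H^q(H,M)$, since injective $G$-modules restrict to $H$-acyclic $H$-modules.
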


We have an analogous spectral sequence in Lie algebra cohomology. 

\begin{thm}{(Hochschild-Serre, \cite{Weibel}, p. 232)}
Given a Lie algebra $\mathfrak{g}$, an ideal $\mathfrak{h} \subset \mathfrak{g}$,  and a $\mathfrak{g}$-module $M$, for any short exact sequence of Lie algebras

\begin{center}
$1 \to \mathfrak{h} \to \mathfrak{g} \to \mathfrak{g}/\mathfrak{h} \to 1$,
\end{center}
there is a first quadrant spectral sequence given by 

\begin{center}
$E_2^{pq} = H^p(\mathfrak{g}/\mathfrak{h}, H^q(\mathfrak{h},M)) \Rightarrow H^{p+q}(\mathfrak{g},M).$
\end{center}
\end{thm}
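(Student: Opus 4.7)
The plan is to mimic the topological proof of the Lyndon--Hochschild--Serre spectral sequence by placing a suitable decreasing filtration on the Chevalley--Eilenberg cochain complex $C^*(\mathfrak{g}, M) = \Hom_k(\Lambda^* \mathfrak{g}, M)$ and invoking the Classical Convergence Theorem stated above. For each $p \geq 0$, define $F^p C^n(\mathfrak{g}, M)$ to consist of alternating $n$-cochains $f$ with the property that $f(x_1, \ldots, x_n) = 0$ whenever at least $n - p + 1$ of the arguments $x_i$ lie in $\mathfrak{h}$. This produces a canonically bounded decreasing filtration
$$0 = F^{n+1} C^n \subseteq F^n C^n \subseteq \cdots \subseteq F^1 C^n \subseteq F^0 C^n = C^n(\mathfrak{g}, M).$$

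First I would check that each $F^p C^*$ is a subcomplex. Using the Chevalley--Eilenberg differential
$$(df)(x_0, \ldots, x_n) = \sum_{i} (-1)^i x_i \cdot f(x_0, \ldots, \hat{x}_i, \ldots, x_n) + \sum_{i<j} (-1)^{i+j} f([x_i, x_j], x_0, \ldots, \hat{x}_i, \ldots, \hat{x}_j, \ldots, x_n),$$
the key input is that $\mathfrak{h}$ is an \emph{ideal}: if $x_i \in \mathfrak{h}$ then $[x_i, x_j] \in \mathfrak{h}$ for every $j$, so no bracket term can decrease the number of $\mathfrak{h}$-arguments below the filtration threshold. A direct count on the two sums then shows $d(F^p C^n) \subseteq F^p C^{n+1}$. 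Since the filtration is canonically bounded, Theorem \ref{convthm} yields a first-quadrant spectral sequence converging to $H^{p+q}(\mathfrak{g}, M)$ with
$$E_0^{pq} \;=\; F^p C^{p+q}(\mathfrak{g}, M) \,/\, F^{p+1} C^{p+q}(\mathfrak{g}, M).$$

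Next I would identify the low pages. Choose a $k$-linear section $\sigma \colon \mathfrak{g}/\mathfrak{h} \to \mathfrak{g}$, producing an isomorphism $\mathfrak{g} \cong \mathfrak{h} \oplus (\mathfrak{g}/\mathfrak{h})$ of $k$-vector spaces. Restriction of cochains then gives
$$E_0^{pq} \;\cong\; \Hom_k\bigl(\Lambda^p(\mathfrak{g}/\mathfrak{h}) \otimes \Lambda^q \mathfrak{h},\; M\bigr) \;\cong\; C^p\bigl(\mathfrak{g}/\mathfrak{h},\; C^q(\mathfrak{h}, M)\bigr),$$
and a direct inspection of which terms of $df$ preserve versus decrease the filtration degree shows that $d_0^{pq} \colon E_0^{pq} \to E_0^{p,q+1}$ is exactly the internal Chevalley--Eilenberg differential of $\mathfrak{h}$ acting on its coefficient variable. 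Taking cohomology in the $\mathfrak{h}$-direction gives
$$E_1^{pq} \;\cong\; C^p\bigl(\mathfrak{g}/\mathfrak{h},\; H^q(\mathfrak{h}, M)\bigr).$$

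The main obstacle is the identification of $d_1$. I would compute it by choosing a representative $\varphi \in F^p C^{p+q}$, applying $d$, discarding the terms that land in $F^{p+2}$, and then passing to the $\mathfrak{h}$-cohomology class. The payoff is that the resulting operator is the Chevalley--Eilenberg differential for $\mathfrak{g}/\mathfrak{h}$ with coefficients in the $\mathfrak{g}/\mathfrak{h}$-module $H^q(\mathfrak{h}, M)$. Two points require care: (i) the adjoint action of $\mathfrak{g}$ on $\mathfrak{h}$ induces an action on $H^q(\mathfrak{h}, M)$ that factors through $\mathfrak{g}/\mathfrak{h}$, since $\mathfrak{h}$ acts trivially on its own cohomology; (ii) the cross terms produced by the non-canonical splitting $\sigma$, which a priori depend on $\sigma$, either vanish modulo $F^{p+2}$ or become coboundaries after passing to $H^q(\mathfrak{h}, M)$, so the answer is section-independent. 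Once $d_1$ has been matched with the Chevalley--Eilenberg differential on $\mathfrak{g}/\mathfrak{h}$, taking cohomology a second time delivers
$$E_2^{pq} \;=\; H^p\bigl(\mathfrak{g}/\mathfrak{h},\; H^q(\mathfrak{h}, M)\bigr),$$
and the convergence to $H^{p+q}(\mathfrak{g}, M)$ is already furnished by Theorem \ref{convthm}.
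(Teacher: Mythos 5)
This theorem is quoted in the paper from Weibel (p.~232) without proof, so there is no in-paper argument to compare against; what can be said is that your route differs from the one in the cited source. Weibel obtains the Hochschild--Serre spectral sequence as a Grothendieck composite-functor spectral sequence: $H^*(\mathfrak{g},M)=\Ext^*_{U\mathfrak{g}}(k,M)$, the invariants functor factors as $M\mapsto M^{\mathfrak{h}}\mapsto (M^{\mathfrak{h}})^{\mathfrak{g}/\mathfrak{h}}$, and one checks that $(-)^{\mathfrak{h}}$ carries injective $U\mathfrak{g}$-modules to $(-)^{\mathfrak{g}/\mathfrak{h}}$-acyclics. Your proposal is instead the original Hochschild--Serre construction: filter the Chevalley--Eilenberg complex by the number of arguments transverse to $\mathfrak{h}$ and apply Theorem \ref{convthm}. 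That route is correct in outline, and your count verifying $d(F^pC^n)\subseteq F^pC^{n+1}$ (where the ideal hypothesis enters) is right. It also buys something the derived-functor proof does not give for free and which this paper actually needs: the filtration is multiplicative under the shuffle cup product, so the multiplicative structure on $E_2$ used in diagram (\ref{D51}) is visible already at the cochain level. Two caveats. First, the identification $E_1^{pq}\cong C^p(\mathfrak{g}/\mathfrak{h},H^q(\mathfrak{h},M))$ silently commutes $\Hom_k(\Lambda^p(\mathfrak{g}/\mathfrak{h}),-)$ past cohomology, which is legitimate here only because $k$ is a field (harmless in this paper, where $k=\Q$, but worth flagging since you stated the theorem over an unspecified base). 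Second, the step you correctly single out as the main obstacle --- showing that the $\mathfrak{h}$-component of $[\sigma(\bar x_i),\sigma(\bar x_j)]$ contributes only coboundaries after passing to $H^q(\mathfrak{h},M)$, so that $d_1$ is exactly the Chevalley--Eilenberg differential of $\mathfrak{g}/\mathfrak{h}$ --- is where essentially all the labor of the original Hochschild--Serre paper sits; as written it is a plan rather than a proof, and a complete write-up would need to carry out that computation or cite it.
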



\section{Cohomology of $\mathbb{Q}$-defined unipotent linear algebraic groups with coefficients}\label{Section: Unipotent}

The goal of this section is to show Proposition \ref{unipotent_2}, which can be thought as Theorem \ref{resmap_solv} in the particular case of a $\Q$-defined unipotent linear algebraic group. This will require first to understand the restriction map for trivial coefficients, and then to study cohomology of rational modules without invariant vectors.

\subsection{Cohomology with trivial coefficients}
We start this subsection by showing some structural results regarding lattices in unipotent $\Q$-defined linear algebraic groups. We have the following basic lemma. 

\begin{lemma}\label{lem:polyrational}
    Let $\U$ be an irreducible unipotent $\Q$-defined linear algebraic group and $\Delta \leq \U(\Q)$ be a Zariski dense subgroup. Then $\Delta$ admits a normal polyrational series of length equal to $\dim_\Q(\U).$
\end{lemma}
\begin{proof}
    We proceed by induction on $\dim_\Q(\U)$. When $\dim_\Q(\U) = 1$, it follows that $\Delta$ is a non-trivial subgroup of $\Q$, and hence, it is polyrational by definition from which the result follows. Now suppose that $\dim_\Q(\U) > 1.$ We note that $Z(\U)$ is non-trivial, and thus, there exists a $\G_a^\Q \leq Z(\U).$ We then have that $\Delta / (\G_a^\Q(\Q) \cap \Delta)$ is Zariski dense in $\U/\G_a^\Q.$ Therefore, our inductive hypothesis implies that $\Delta / (\G_a^\Q(\Q) \cap \Delta)$ is polyrational. Since $\G_a^\Q(\Q) \cap \Delta \leq \Delta$ and $\Delta$ is a polyrational extension of a non-trivial subgroup of $\Q$, it must be polyrational. Additionally,  the inductive case implies that $\Delta / (\G_a^\Q(\Q) \cap \Delta)$ admits a normal polyrational series $\{\overline{\Delta_i}\}_{i=2}^{\dim_\Q(\U)}$. Then, letting $\pi \colon \Delta \to \Delta / (\G_a^\Q (\Q) \cap \Delta)$ be the natural projection, it follows that $\{\G_a^\Q(\Q) \cap \Delta\} \cup \{\pi^{-1}(\overline{\Delta_i})\}_{i=2}^{\dim_\Q(\U)}$ is a normal polyrational series.
\end{proof}

We also have the following lemma as an application of Lemma \ref{lem:polyrational}. 

\begin{lemma}\label{lem:cocompat_subgroup}
    Let $\U$ be an irreducible unipotent $\Q$-defined linear algebraic group and $\Delta \leq \U(\Q)$ be a Zariski dense subgroup. Then, there exists a torsion-free, finitely generated nilpotent subgroup $\Lambda \leq \Delta$ such that $h(\Lambda) = \dim_{\Q}(\U)$. 
\end{lemma}
\begin{proof}
    We proceed by induction on $\dim_\Q(\U)$, and for the base case, we have $\U = \Q$. Hence, $\Delta$ must have an element $x$ of infinite order, and thus, $\left<x\right> \cong \Z.$ Therefore, we let $\Lambda = \left<x\right>$. For the inductive case, suppose that $\dim_\Q(\U) > 1.$ By Lemma \ref{lem:polyrational}, there exists a normal polyrational series $\{\Delta_i\}_{i=1}^{\dim_\Q(\U)}$ for $\Delta$. It follows that $\Delta_{\dim_\Q(\U) - 1}$ is Zariski dense in an irreducible unipotent $\Q$-defined linear algebraic group of dimension $\dim_\Q(\U) - 1.$ We assume by induction that there exists a torsion-free, finitely generated nilpotent subgroup $\Lambda^\prime \leq \Delta_{\dim_\Q(\U) - 1}$ such that $h(\Lambda^\prime) = \dim_\Q(\U) - 1.$ We also note that $\Delta / \Delta_{\dim_\Q(\U) - 1}$ is Zariski dense in $\U/ \overline{\Delta_{\dim_\Q(\U) - 1}}$. Letting $\pi \colon \Delta \to \Delta / \Delta_{\dim_\Q(\U)}$ be the natural projection, the base case implies that there exists an element $x \in \Delta$ such that $\overline{\left<\pi(x)\right>} = \U/ \overline{\Delta_{\dim_\Q(\U) - 1}}$. We claim the group $\Lambda = \left<\Lambda^\prime, x\right>$ is our desired group. Since $\Delta_{\dim_\Q(\U) - 1} \cap \Lambda$ is normal in $\Lambda$, we have the following short exact sequence:
    \begin{center}
    $1 \longrightarrow \Delta_{\dim_\Q(\U) - 1} \cap \Lambda \longrightarrow \Lambda \longrightarrow \Lambda / (\Delta_{\dim_\Q(\U) - 1} \cap \Lambda) \longrightarrow 1.$
    \end{center}
    Since 
    \begin{center}
        $\Lambda^\prime \leq \Delta_{\dim_\Q(\U) - 1} \cap \Lambda \leq \Delta_{\dim_\Q(\U) - 1}$,
    \end{center} it follows that
    \begin{center} 
    $h(\Delta_{\dim_\Q(\U) - 1} \cap \Lambda) = \dim_\Q(\U) - 1.$
    \end{center}
    Since $\Lambda / (\Delta_{\dim_\Q(\U) - 1} \cap \Lambda) \cong \left<\pi(x)\right>$, it follows that $h(\Lambda) = \dim_\Q(\U).$ We end by noting that since $\U$ is irreducible and $\dim_\Q(\overline{\Lambda}) = \dim_\Q(\U)$, we must have that
    \begin{center}
$\U = \U^\circ \leq \overline{\Lambda} \leq \overline{\Delta} = \U$.
\end{center}
Hence, $\Lambda$ is Zariski dense in $\U.$
\end{proof}

In \cite[Lemma 2.21]{Kunkel}, Kunkel showed that the restriction map from an irreducible $\Q$-defined unipotent linear algebraic group to a discrete cocompact subgroup (in the Euclidean topology of the real points) with trivial $\Q$-coefficients is an isomorphism. The following proposition is an extension of this result to the non-discrete setting.

\begin{prop}\label{prop:zariski dense cohomology trivial coefficients}
    Let $\U$ be an irreducible unipotent $\Q$-defined linear algebraic group and $\Delta \leq \U(\Q)$ be a Zariski dense subgroup of finite abelian ranks. Then the restriction map $r_\Q \colon H^*(\U, \Q) \to H^*(\Delta, \Q)$ is an isomorphism of cohomology rings. 
\end{prop}
\begin{proof}
    If $\Delta$ is finitely generated, then this lemma follows from \cite[Lemma 2.21]{Kunkel}. We thus assume that $\Delta$ is not finitely generated. By Lemma \ref{lem:cocompat_subgroup}, there exists a torsion-free, finitely generated nilpotent subgroup $\Lambda \leq \Delta$ such that $h(\Lambda) = \dim_\Q(\U)$ and $\overline{\Lambda} = \U.$ Since $\Delta$ is countable, we may choose an enumeration $\{x_i\}_{i=1}^\infty$ of $\Delta$. Let $\Lambda_0 = \Lambda$ and $\Lambda_i = \left<\Lambda, x_1, \ldots, x_i\right>$ for $i>0$, and let $\varphi_i \colon \Lambda_i \to \Lambda_{i+1}$ be the natural inclusion.  We then have that $\underset{\longrightarrow} \lim (\Lambda_i, \varphi_i) \cong \Delta$. Each $\Lambda_i$ is a torsion-free, finitely generated, nilpotent group which contains a Zariski dense subgroup. Hence, each $\Lambda_i$ is Zariski dense in $\U$. Moreover, since $h(\Lambda_i) = \dim_\Q(\U)$, we must have that $\Lambda_i$ is a finite index subgroup of $\Lambda_{i+1}$.  Thus, $\Lambda_i$ is a Zariski dense, torsion-free, finitely generated subgroup of $\U(\R)$. Hence, \cite[Theorem 2.1]{raghunathan} implies that $\U(\R)/\Lambda_i$ is compact, and since $\Lambda_i$ is a discrete subgroup of $\U(\R)$ and $\U(\R)$ is contractible, the quotient $\U(\R)/\Lambda_i$ is an Eilenberg-Maclane space for $\Lambda_i.$ Then, each homomorphism $\varphi_i \colon \Lambda_i \to \Lambda_{i+1}$ has an associated unique pointed homotopy class of continuous maps $\psi_i \colon \U(\R)/\Lambda_i \to \U(\R)/\Lambda_{i+1}$ such that $\psi_i$ induces the map $\varphi_i$ of fundamental groups.  We now consider the following mapping telescope:
\begin{center}
    $T \quad \colon  \quad \U(\R) / \Lambda_0 \overset{\psi_0}\longrightarrow \U(\R) / \Lambda_1 \overset{\psi_1}\longrightarrow  \U(\R) / \Lambda_2 \overset{\psi_2}\longrightarrow \cdots$.
\end{center}
It follows that 
$$
\pi_1(T) = \underset{\longrightarrow} \lim (\Lambda_i, \varphi_i) = \Delta
$$  and for $\ell>1$, $\pi_\ell(T) = \underset{\longrightarrow}\lim (\pi_\ell(\U(\R)/\Lambda_i), (\psi_i)_*)) = 0$. Therefore, $T$ is an Eilenberg-Maclane space for $\Delta$. The inclusion map $\varphi_i \colon \Lambda_i \to \Lambda_{i+1}$ induces the restriction map $(\varphi_i)^* \colon H^*(\Lambda_{i+1}, \Q) \to H^*(\Lambda_i, \Q)$, and if $\theta_i \colon H^*(\U, \Q) \to H^*(\Lambda_i, \Q)$ is the restriction map, we have $(\varphi_i)^* \circ \theta_{i+1} = \theta_i$. \cite[Lemma 2.21]{Kunkel} implies that $\theta_i$ is an isomorphism of cohomology rings for all $i$, and therefore, $(\varphi_i)^*$ is an isomorphism. Hence, 
\begin{center}
$H^\ell(\Delta, \Q) = H^\ell(T,\Q)  = \underset{\longleftarrow} \lim (H^\ell(\U(\R)/\Lambda_i, \Q), (\psi_i)^*) = H^\ell(\Lambda, \Q) = H^\ell(\U,\Q). 
$
\end{center}
by \cite[Theorem 3F.5]{hatcher} and \cite[Lemma 2.21]{Kunkel}. Since $H^*(\U, \Q)$ and $H^*(\Delta, \Q)$ are finite dimensional over $\Q$ and abstractly isomorphic from the above arguments, they have equal dimension as $\Q$-vector spaces. Therefore, it remains to show that $r_\Q: H^*(\U, \Q) \to H^*(\Delta, \Q)$ is an injection. 

We have the following commutative diagram of restriction maps:

\begin{equation}
\begin{tikzcd}
    H^*(\U, \Q) \arrow[rr, "r_\Q"] \arrow[rrdd, "r_{\Q}''"'] && H^*(\Delta, \Q) \arrow[dd, "r_\Q'"] \\
    \\
    {} && H^*(\Lambda, \Q) 
\end{tikzcd}.
\end{equation}

By \cite{Kunkel}, it follows that $r_\Q''$ is an isomorphism of cohomology rings, and therefore, $r_\Q$ is an injection. This implies that $r_\Q: H^*(\U, \Q) \to H^*(\Delta, \Q)$ is an isomorphism of cohomology rings. 
\end{proof}

\subsection{Cohomology without invariant vectors}

In this subsection, we compile cohomological vanishing results for Zariski dense subgroups of a $\Q$-defined unipotent linear algebraic group $\U$ with respect to finite dimensional rational $\U$-modules without invariant vectors, in different cohomology theories. We start by mentioning a result by Dixmier, implying that such vanishings are already known for Lie algebra cohomology of $\mathfrak{u}_\Q$ where $\mathfrak{u}_\Q$ is the Lie algebra of $\U$. 

\begin{thm} \cite[Théorème 1]{dixmier1955cohomologie} \label{dixmier unipotent Lie algebra}
Let $\mathfrak{u}_\Q$ be a finite dimensional nilpotent Lie algebra over $\Q$. Let $M$ be some finite dimensional $\mathfrak{u}_\Q$-module such that $M^{\mathfrak{u}_\Q} = \{0\}$. Then $H^* (\mathfrak{u}_\Q, M) = \{0\}$.
\end{thm}

We now state a result relating rational cohomology with Lie algebra cohomology in the unipotent case. This is a particular case of a theorem that we will state later as Theorem \ref{T52}.

\begin{prop} \cite[Theorem 5.1]{hochschild} \label{unipotent Lie alg cohom vs rational cohom}
Let $\U$ be an irreducible  unipotent $\Q$-defined linear algebraic group with corresponding Lie algebra $\mathfrak{u}_\Q$. Then, for every finite dimensional rational $\U$-module $M$ and for all $n \geq 0$, we have the following isomorphisms
\begin{equation*}
    H^n(\mathfrak{u}_\Q, M) = H^n(\U, M).
\end{equation*}
In particular, if the $\U$-module $M$ has no $\U$-invariant vectors, then $ H^n(\U, M) = 0$.
\end{prop}

We now turn to prove analogous statements for group cohomology. In particular, we demonstrate that cohomology of a torsion-free $\Q$-linear nilpotent group $\Delta$ of finite abelian ranks with coefficients in a $\Q[\Delta]$-module without $\Delta$-invariant vectors is trivial. Notice that a subgroup $\Delta\ \leq \U(\Q)$ in some  unipotent $\Q$-defined linear algebraic group $\U$ is torsion-free and of finite abelian ranks.

\begin{prop}\label{prop: nilpotent wiouth invariant vectors cohomology}
    Let $\Delta$ be a torsion-free $\Q$-linear nilpotent group of finite abelian ranks, and let $M$ be a finite dimensional rational $\Delta$-module without $\Delta$-invariant vectors. Then $H^*(\Delta, M) = \{0\}$.
\end{prop}
\begin{proof}
We start by assuming that the group $\Delta$ is abelian. We proceed by induction on the dimension of the $\Q[\Delta]$-module $M$, and suppose that $M$ is a nontrivial one-dimensional representation. Thus, $M$ is given by a non trivial multiplicative character $\pi : \Delta \to \Q^*$. On homogeneous cochains we have an action given by
$$
\gamma c (\gamma_0, \ldots, \gamma_k) := c (\gamma_0 \gamma, \ldots, \gamma_k \gamma)
$$
for $\gamma, \gamma_0, \ldots, \gamma \in \Delta$. It is known that this action always induces a trivial action when passing to cohomology \cite[Corollary after Proposition 2]{hochschild-serre-group-extensions-cohomology}. This means that for every cocycle $c \in Z^i (\Delta, M)$, the element $\gamma.c - c$ is a coboundary for any $\gamma \in \Delta$. But since $\Delta$ is abelian, at the level of cocycles the same action is given by:
$$
\gamma c (\gamma_0, \ldots, \gamma_k) = c (\gamma \gamma_0, \ldots, \gamma \gamma_k) = \pi(\gamma)  c (\gamma_0, \ldots, \gamma_k).
$$
Since $\pi$ is a non-trivial character, there exists $\gamma \in \Delta$ such that $\pi(\gamma) \neq 1$, and hence, $ c = \frac{1}{(\pi (\gamma) - 1)} (\gamma.c - c)$ is a coboundary, implying $H^i (\Delta, M ) = 0$.

Now for the inductive case, suppose that $\Delta$ is a torsion-free, $\Q$-linear abelian group of finite abelian ranks and that $M$ is a $\Q[\Delta]$-module with no $\Delta$-invariant vectors and $\dim_{\mathbb{Q}}(M) > 1$. Then by tensoring with $\C$, we obtain a $\C[\Delta]$-module $M_\C : = M\otimes \C$ with no $\Delta$-invariant vectors such that $\dim_\C(M_\C) = \dim_\Q(M) > 1$. In particular, the $\C[\Delta]$-module $M_{\C}$ is not irreducible. Thus, there exists a nontrivial proper $\C[\Delta]$-submodule $N \leq M_\C.$ Since $M_\C^\Delta = \{0\},$ we have $N^{\Delta} = \{0\}.$ Additionally, we have a short exact sequence of $\C[\Delta]$-modules
$$
\{0\} \longrightarrow N \longrightarrow M_\C \longrightarrow M_\C/N \longrightarrow \{0\}
$$
which induces a long exact sequence in cohomology
\begin{eqnarray*}
\cdots \longrightarrow H^{i-1}(\Delta, M_{\C}/N) \longrightarrow H^i(\Delta, N) \longrightarrow H^i(\Delta, M_{\C})\\ \longrightarrow H^i(\Delta, M_{\C}/N) \longrightarrow H^{i+1}(\Delta, N) \longrightarrow \cdots.
\end{eqnarray*}
By induction, $H^i(\Delta,N) = 0$ for all $i$. Therefore, we may write the above long exact sequence as 
$$
\cdots \longrightarrow H^{i-1}(\Delta, M_{\C}/N) \longrightarrow 0 \longrightarrow H^i(\Delta, M_{\C}) \longrightarrow H^i(\Delta, M_{\C}/N) \longrightarrow 0 \longrightarrow \cdots.
$$
From here, we see that $H^i(\Delta,M_{\C}) \cong H^i(\Delta, M_{\C}/N).$

We claim that $M_{\C}/N$ doesn't have any $\Delta$-invariant vectors, and for a contradiction, suppose otherwise. There exists a $v \in M_{\C} \backslash N$ such that $a \cdot v = v \mod N$ for all $a \in \Delta$. Therefore, $a \cdot v - v \in N$ for all $a \in \Delta.$ Define the map $c \colon \Delta \to N$ to given by $c(a) = a \cdot v - v$ which can be seen to be a $1$-cocycle by a direct calculation. Since $N^{\Delta} = \{0\}$, we have $H^1(\Delta, N) =\{0\}.$ Therefore, $c$ is a 1-coboundary, that is, there exists a $w \in N$ such $\delta(w) = c$. Indeed, we see that $\delta(w)(a) = a \cdot w - w \in N.$ Hence, for all $a \in \Delta$, we have $a \cdot w - w = a \cdot v - v$. Rearranging, we have $a \cdot (w-v) = w-v$. By construction, $w-v \neq 0$ and $w-v \in M_{\C}^{\Delta}$ which is a contradiction. Hence, $(M_{\C}/N)^{\Delta} = 0$, and thus, induction implies $H^i(\Delta, M_{\C}/N) = \{0\}.$ Therefore, $H^i(\Delta,M_{\C}) = \{0\}$ for all $i$. Since $\C$ is a flat $\Q[\Delta]$-module, it follows $H^i(\Delta, M_{\C}) = H^i(\Delta,M) \otimes_{\Q} \C$ for all $i$. Hence, $ H^i(\Delta,M) = \{ 0 \}$ for all $i$. This concludes the proof when $\Delta$ is abelian.

Now suppose that $\Delta$ is a general torsion-free, $\Q$-linear nilpotent group of finite abelian ranks and that $M$ is an irreducible $\Q[\Delta]$-module.  We proceed by induction on step length, and observe that we justified the base case in the above arguments. Thus, suppose that $\Delta$ is not abelian and that $M^\Delta = \{0\}.$ Since $Z(\Delta)$ is a torsion-free abelian group, we have the following short exact sequence:
$$
1 \longrightarrow Z(\Delta) \longrightarrow \Delta \longrightarrow \Delta/Z(\Delta) \longrightarrow 1.
$$
We now consider the Lyndon-Hochschild-Serre spectral sequence associated to the above short exact sequence and write the terms of $E_2$ as
$$
E_2^{pq} = H^p(\Delta/Z(\Delta), H^q(Z(\Delta), M)) \Rightarrow H^{p+q}(\Delta, M).
$$
We claim that $M^{Z(\Delta)} = \{x \in M \: : \: z \cdot x =z \text{ for all } z \in Z(\Delta)\}$ is a $\Q[\Delta]$-submodule of $M$. Indeed, for $z \in Z(\Gamma)$, $m \in M^{Z(\Delta)}$ and $a \in \Delta,$ we have
$$
z \cdot (a \cdot m) = (za) \cdot m = a \cdot (z \cdot m) = a \cdot m.
$$
Thus, $a \cdot m$ is fixed by every $z \in Z(\Delta)$ which gives our claim. Since $M$ is irreducible, we have either $M^{Z(\Delta)} = \{0\}$ or $M^{Z(\Delta)} = M.$ If $M^{Z(\Delta)} = \{0\},$ then induction implies  $H^i(Z(\Delta), M)=\{0\}$ for all $i$. Hence, $E_2^{pq} = \{0\}$ for all $p,q.$ Therefore, $H^i(\Delta, M) =\{0\}$ for all $i$. Hence, we may assume that $M^{Z(\Delta)} = M.$ 

We now claim that
$$
H^q(Z(\Delta), M) = M^{d \choose q}
$$
where $Z(\Delta) \otimes_{\Z} \Q = \Q^d$. Since $M$ is a trivial $\Q[Z(\Delta)]$-module of finite dimension, the universal coefficient theorem allows us to write
$$
H^q(Z(\Delta), M) = H^q(Z(\Delta), \Q) \otimes_{\Q} M.
$$
Proposition \ref{prop:zariski dense cohomology trivial coefficients} implies
$$
H^q(\Q^d, \Q) \cong H^q(Z(\Delta), \Q)
$$
The well known formula for $H^k(\Q^d,\Q)$ is given as
$$
H^k(\Q^d,\Q) = \Q^{d \choose k}.
$$
Therefore,
$$
H^q(Z(\Delta), M) \cong \Q^{d \choose q} \otimes_{\Q} M = M^{d \choose q}
$$
as desired.

Since $M=  M^{Z(\Delta)}$ and $(M^{Z(\Delta)})^{\Delta / Z(\Delta)} = M^{\Delta} = \{ 0 \}$, we have that $\Delta/Z(\Delta)$ has an induced action on $M$ without invariant vectors. Therefore, induction implies
$$
H^p(\Delta/Z(\Delta), H^q(Z(\Delta), M)) = H^p(\Delta/Z(\Delta), M^{d \choose q}) = \bigoplus_{i=1}^{d \choose q}H^p(\Delta / Z(\Delta),M) = \{0\}.
$$
Hence, $H^p(\Delta,M) = \{0\}.$

If $M$ is a not an irreducible $\Q[\Delta]$-module, we proceed using the same long exact sequence arguments as in the abelian case. Therefore, we are finished.
\end{proof}

\subsection{Restriction map from irreducible unipotent linear algebraic groups defined over $\Q$ to Zariski dense subgroups}
In this subsection, we prove the following proposition that will be necessary for the proofs of Theorems \ref{main_thm} and \ref{resmap_solv} in the following section. 

\begin{prop}\label{unipotent_2}
    Let $\U$ be an irreducible unipotent $\Q$-defined linear algebraic group and $\Delta \leq \U(\Q)$ be a Zariski dense subgroup of finite abelian ranks. Then for every finite dimensional rational $\U$-module $M$, the restriction map $r_M \colon H^*(\U, M) \to H^*(\Delta, M)$ is an isomorphism of cohomology rings. 
\end{prop}

\begin{proof}
We use results from the previous subsection to extend Proposition \ref{prop:zariski dense cohomology trivial coefficients} to non-trivial coefficients. Since $\Delta$ is Zariski dense in $\U$, the $\Delta$-invariant vectors in $M$ coincide with $\U$-invariant vectors in $M$. We reason by induction on the dimension of the module $M$.

Suppose $\dim_\Q (M) = 1$. Then either the module $M$ is a trivial $\U$-module, or it has no invariant vectors. In the first case, the result follows from Proposition \ref{prop:zariski dense cohomology trivial coefficients}. In the second case, it follows from Propositions \ref{unipotent Lie alg cohom vs rational cohom} and \ref{prop: nilpotent wiouth invariant vectors cohomology}.

Now suppose that $\dim_\Q (M) \geq 2$ and that the result is known for any $\U$-module of dimension $< \dim_\Q(M) $. If the module $M$ has no invariant vectors, then again combining Propositions \ref{unipotent Lie alg cohom vs rational cohom} and \ref{prop: nilpotent wiouth invariant vectors cohomology} yields the result. Hence, we may suppose that $M^\U = M^\Delta \neq \{ 0 \}$. We have a short exact sequence of modules

    $$
    \{0\} \longrightarrow M^{\U} \longrightarrow M \longrightarrow M / M^{\U} \longrightarrow \{0\}
    $$
    where $M / M^{\U} = M /M^\Delta$ is given the induced $\U$-structure. The above sequence induces a long exact sequence in rational cohomology
    \begin{eqnarray*}
\cdots \longrightarrow H^{i-1}(\U, M/M^{\U}) \longrightarrow H^i(\U, M^{\U}) \longrightarrow\\ H^i(\U, M) \longrightarrow H^i(\U, M/M^{\U}) \longrightarrow H^{i+1}(\U, M^{\U}) \longrightarrow \cdots
\end{eqnarray*}
and in group cohomology
    \begin{eqnarray*}
\cdots \longrightarrow H^{i-1}(\Delta, M/M^{\Delta}) \longrightarrow H^i(\Delta, M^{\Delta}) \longrightarrow\\ H^i(\Delta, M) \longrightarrow H^i(\Delta, M/M^{\Delta}) \longrightarrow H^{i+1}(\Delta, M^{\Delta}) \longrightarrow \cdots.
\end{eqnarray*}
The restriction map from $\U$ to $\Delta$ induces a map between these two long exact sequences.

\adjustbox{scale=0.75,center}{
\begin{tikzcd}
	{ H^{i-1}(\U, M/M^{\U})} & {H^i(\U, M^{\U})} & {H^i(\U, M)}& {H^i(\U, M/M^{\U})}& {H^{i+1}(\U, M^{\U})} \\
	{H^{i-1}(\Delta, M/M^{\Delta})} & {H^i(\Delta, M^{\Delta})} & {H^i(\Delta, M)}& {H^i(\Delta, M/M^{\Delta})}& {H^{i+1}(\Delta, M^{\Delta})}
	\arrow["r", from=1-1, to=2-1]
    \arrow["r", from=1-2, to=2-2]
    \arrow["r", from=1-3, to=2-3]
    \arrow["r", from=1-4, to=2-4]
    \arrow["r", from=1-5, to=2-5] 
    \arrow["", from=1-1, to=1-2]
	\arrow["", from=1-2, to=1-3]
	\arrow["", from=1-3, to=1-4]
	\arrow["", from=1-4, to=1-5]
    \arrow["", from=2-1, to=2-2]
	\arrow["", from=2-2, to=2-3]
	\arrow["", from=2-3, to=2-4]
	\arrow["", from=2-4, to=2-5]
\end{tikzcd}}

By Proposition \ref{prop:zariski dense cohomology trivial coefficients} and the universal coefficient theorem for trivial $\U$-modules and $\Delta$-modules, the second and fifth vertical arrows are isomorphisms. Since $M^\U = M^\Delta \neq \{ 0 \}$, we have $\dim_\Q (M / M^\U) < \dim_\Q(M)$, and hence, the first and fourth vertical arrows are isomorphisms by induction hypothesis. Hence, the third vertical arrow is also an isomorphism by the 5-lemma. This completes the proof of the Proposition.
\end{proof}

\section{Proof of Main Theorems}\label{Section: Proof of main thms}

\subsection{Proof of Theorem \ref{resmap_solv}}

We dedicate this subsection to the proof of Theorem \ref{resmap_solv} which we restate here.

\begin{thm}
    Let $\G$ be an irreducible solvable $\Q$-defined linear algebraic group and $\Gamma \leq \G(\Q)$ be a Zariski dense subgroup. Then for every finite dimensional rational $\G$-module $M$, the restriction map $r_{M} \colon H^*(\G, M) \to H^*(\Gamma, M)$ is an injection of cohomology rings.
\end{thm}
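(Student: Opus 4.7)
The plan is to reduce to the unipotent case via the Levi decomposition $\G = \U \rtimes \T$, where $\U$ denotes the $\Q$-defined unipotent radical and $\T$ the $\Q$-split maximal torus. Set $\Gamma_{\U} := \Gamma \cap \U(\Q)$. The first ingredient I would establish is that $\Gamma_{\U}$ is Zariski dense in $\U$. To see this, let $\V := \overline{\Gamma_{\U}}^{\,\mathrm{Zar}}$; since $\U \trianglelefteq \G$ and $\Gamma$ normalizes $\Gamma_{\U}$, passing to Zariski closures shows that $\overline{\Gamma} = \G$ normalizes $\V$, so $\V$ is normal in $\G$. The quotient $\G/\V = (\U/\V) \rtimes \T$ then contains the image of $\Gamma$ as a Zariski dense subgroup, but this image is contained in $\T$, forcing $\U/\V$ to be trivial and thus $\V = \U$.

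Since $\T$ is a $\Q$-split torus, the category of finite dimensional rational $\T$-modules is semisimple, so $H^p(\T, N) = 0$ for all $p > 0$ and all such $N$. The Hochschild--Serre spectral sequence attached to $1 \to \U \to \G \to \T \to 1$ therefore collapses on the $p=0$ column, giving a ring isomorphism $H^n(\G, M) \cong H^n(\U, M)^{\T}$ compatible with restriction to $\U$. In particular, the edge map $H^n(\G, M) \hookrightarrow H^n(\U, M)$ is injective.

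The heart of the argument is the unipotent case: for $\U$ unipotent over $\Q$ and any Zariski dense $\Gamma_{\U} \leq \U(\Q)$, the restriction $H^*(\U, M) \to H^*(\Gamma_{\U}, M)$ is injective. Noetherianity of the Zariski topology on $\U$ produces a finitely generated subgroup $\Delta \leq \Gamma_{\U}$ that is itself Zariski dense in $\U$. Such a $\Delta$ is a torsion free finitely generated nilpotent group whose $\Q$-Malcev completion coincides with $\U(\Q)$. Hochschild's theorem identifies $H^*(\U, M) \cong H^*(\mathfrak{u}, M)$, while a Malcev--Nomizu style comparison for finitely generated torsion free nilpotent groups with finite dimensional rational coefficients yields $H^*(\Delta, M) \cong H^*(\mathfrak{u}, M)$, induced by the natural restriction map. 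Consequently the composition
\[
H^*(\U, M) \longrightarrow H^*(\Gamma_{\U}, M) \longrightarrow H^*(\Delta, M)
\]
is an isomorphism, forcing the first arrow to be injective.

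Assembling the pieces via the commuting square
\[
\begin{tikzcd}
H^n(\G, M) \arrow[r] \arrow[d] & H^n(\Gamma, M) \arrow[d] \\
H^n(\U, M) \arrow[r] & H^n(\Gamma_{\U}, M)
\end{tikzcd}
\]
in which both verticals are restriction maps, the composition along the down-then-right path is a composition of two injections, so the top horizontal $r_M$ is injective. Compatibility of restriction with cup products (diagram (\ref{D1})) then upgrades this to an injection of cohomology rings. The main obstacle I anticipate is the unipotent step, and in particular verifying that the Malcev--Nomizu comparison extends to nontrivial rational $\U$-modules $M$ and is induced by the group-theoretic restriction, rather than only identifiable via an intermediate Chevalley--Eilenberg model.
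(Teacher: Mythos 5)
Your overall route is the paper's own: decompose $\G = \U \rtimes \T$, use Hochschild's theorem to identify $H^*(\G,M)$ with $H^*(\U,M)^{\T} \hookrightarrow H^*(\U,M)$, handle the unipotent radical separately, and conclude from the commuting square of restriction maps. Your unipotent step is sound in outline and is in fact leaner than the paper's: you only need injectivity of $H^*(\U,M) \to H^*(\Gamma_\U,M)$, which you obtain by factoring through a finitely generated Zariski dense $\Delta$; your Noetherianity argument replaces the paper's Lemmas 4.3 and 4.4, and the Mal'tsev--Nomizu comparison with rational coefficients that you flag as the anticipated obstacle is exactly where the paper leans on Kunkel's Lemma 2.21 together with a K\"{u}nneth argument. (The paper proves the stronger statement that the restriction to $\Gamma_\U$ is an isomorphism, Lemma 4.2, because that is needed again for Theorem 1.1.)

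The genuine problem is your first ingredient. From the normality of $\overline{\Gamma_\U}$ in $\G$ you conclude that the image of $\Gamma$ in $\G/\overline{\Gamma_\U}$ ``is contained in $\T$''; what actually follows is only that this image meets the $\Q$-points of $\U/\overline{\Gamma_\U}$ trivially, and that does not force $\U/\overline{\Gamma_\U}$ to be trivial. Indeed the density claim is false in general: in $\G = \G_a^\Q \times \G_m^\Q$ the cyclic group $\Gamma = \left<(1,2)\right>$ is Zariski dense --- the only proper closed connected subgroups are $\G_a^\Q \times 1$, $1 \times \G_m^\Q$, and the trivial group, and no positive power of $(1,2)$ lies in any of them --- yet $\Gamma \cap \G_a^\Q(\Q)$ is trivial. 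So $\Gamma_\U$ need not be Zariski dense in $\U$, and the bottom arrow of your square need not be injective. To be fair, the paper's own proof makes the same assumption silently when it applies Lemma 4.2 to $\Gamma_\U$, so you have surfaced a real subtlety in the argument rather than introduced a new error; but your proposed justification does not close it, and some further idea (for instance replacing $\U$ by $\overline{\Gamma_\U}$ and analyzing the abelian quotient that remains, or treating central $\G_a^\Q$-factors separately) is required.
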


Theorem \ref{resmap_solv} is an immediate consequence of Proposition \ref{unipotent_2}. 

\begin{proof}
Let $\G = \U \rtimes \T$ where $\U$ is the $\Q$-defined unipotent radical and $\T$ is a $\Q$-split maximal torus. \cite[Theorem 5.2]{hochschild} implies that $H^*(\G,M)$ is isomorphic to $H^*(\U,M)^\T$ where $H^*(\U,M)^\T$ is the subring of $\T$-invariants by the action of $\T$ on $\U$ by conjugation. Therefore, we have the natural inclusion $\alpha^*: H^*(\G,M) \rightarrow H^*(\U,M)$ which is induced by the inclusion $\alpha: \U \rightarrow \textbf{G}$. Letting $\Gamma_\U = \Gamma \cap \U(\Q)$, we also have by Lemma \ref{unipotent_2} that $r_M: H^*(\U, M) \to H^*(\Gamma_\U, M)$ is an isomorphism of cohomology rings. This gives the following  commutative diagram 

\begin{equation}\label{D8}
\begin{tikzcd}
{H^*(\G,M)=H^*(\U,M)^\T} \arrow[hookrightarrow]{rr}{\alpha^{\star}} \arrow[dd, "r_{M}"] && {H^*(\U,M)} \arrow[dd, "r_{M}", "\cong"'] \\ \\
{H^*(\Gamma, M)} \arrow[rr, "i^*"] && {H^*(\Gamma_\U, M)}
\end{tikzcd}
\end{equation}
where $i: \Gamma_\U \rightarrow \Gamma$ is the inclusion map. Since $r_M \circ \alpha^*$ is an injection, the restriction map from $\G$ to $\Gamma$ must be an injection. 
\end{proof}


\subsection{The ring map from $H^*(\mathfrak{g}_\Q,M)$ to $H^*(\Gamma, M)$}

We now turn to the proof of Theorem $\ref{main_thm}$ which we restate here.

\begin{thm}
       Let $\G$ be an irreducible solvable $\Q$-defined linear algebraic group  with associated $\Q$-defined Lie algebra $\mathfrak{g}_{\Q}$, and let $\Gamma \leq \G(\Q)$ be a Zariski dense subgroup such that it intersects the $\Q$-split maximal torus discretely in the Euclidean topology. Then for every finite dimensional rational $\G$-module $M$, there exists an isomorphism $\Phi_M: H^*(\mathfrak{g}_{\Q}, M) \to H^*(\Gamma, M)$ of cohomology rings.
\end{thm}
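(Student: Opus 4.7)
My plan is to adapt Kunkel's Hochschild--Serre spectral sequence strategy to the Zariski dense setting. Writing $\G = \U \rtimes \T$, set $\Gamma_\U := \Gamma \cap \U(\Q)$, a normal subgroup of $\Gamma$, and consider the short exact sequence $1 \to \Gamma_\U \to \Gamma \to \Gamma/\Gamma_\U \to 1$. The first step is structural: (i) show that $\Gamma_\U$ is Zariski dense in $\U$, so that Lemma \ref{unipotent_2} yields a $\T$-equivariant ring isomorphism $H^*(\Gamma_\U, M) \cong H^*(\U, M)$, which combined with the standard isomorphism $H^*(\U, M) \cong H^*(\mathfrak{u}_{\Q}, M)$ for unipotent groups identifies the unipotent layer of the cohomology on both sides; and (ii) use the discreteness hypothesis on $\Gamma \cap \T(\R)$ to show that $\Gamma/\Gamma_\U$ is a finitely generated free abelian group of rank $\dim \T$, mapping via $p\colon \G \to \T$ to a Zariski dense subgroup of $\T$.

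Once (i) and (ii) are in place, I would apply Hochschild--Serre on both sides. On the Lie algebra side, $E_2^{p,q} = H^p(\mathfrak{t}_{\Q}, H^q(\mathfrak{u}_{\Q}, M))$; decomposing $H^q(\mathfrak{u}_{\Q}, M)$ into $\mathfrak{t}_{\Q}$-weight spaces and invoking the classical vanishing $H^*(\mathfrak{t}_{\Q}, V_\lambda) = 0$ for $\lambda \neq 0$ collapses this to $\Lambda^p(\mathfrak{t}_{\Q}^*) \otimes H^q(\mathfrak{u}_{\Q}, M)^{\mathfrak{t}_{\Q}}$. On the group side, $E_2^{p,q} = H^p(\Gamma/\Gamma_\U, H^q(\Gamma_\U, M))$; since $\Gamma/\Gamma_\U$ is Zariski dense in $\T$, any nontrivial algebraic character of $\T$ restricts to a nontrivial character of $\Gamma/\Gamma_\U \cong \Z^{\dim \T}$, and the K\"unneth formula then gives $H^*(\Gamma/\Gamma_\U, V_\lambda) = 0$ for $\lambda \neq 0$. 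The same weight argument forces degeneration at $E_2$ on both sides, and the two $E_2$-pages become $\Lambda^p \Hom_\Z(\Gamma/\Gamma_\U, \Q) \otimes H^q(\mathfrak{u}_{\Q}, M)^{\T}$ and $\Lambda^p(\mathfrak{t}_{\Q}^*) \otimes H^q(\mathfrak{u}_{\Q}, M)^{\T}$ respectively.

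To upgrade this abstract $E_2$-agreement to a ring isomorphism $\Phi_M$, I would construct a chain-level comparison using the Chevalley--Eilenberg complex of $\mathfrak{g}_{\Q}$ and restrict along $\Gamma \hookrightarrow \G(\R)$, in the spirit of Mostow and Kunkel but without the cocompactness assumption. A canonical identification $\Lambda^* \Hom_\Z(\Gamma/\Gamma_\U, \Q) \cong \Lambda^*(\mathfrak{t}_{\Q}^*)$ is obtained by dualising the logarithm map $\Gamma/\Gamma_\U \hookrightarrow \T(\R)^\circ \to \mathfrak{t}_{\R}$ and descending to $\Q$; multiplicativity with respect to the cup products of Section 3 follows because the comparison is defined at the cochain level, compatibly with the pairing formulas recorded for both sides.

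The main obstacle I anticipate is step (i): proving that $\Gamma_\U$ is Zariski dense in $\U$. The Zariski closure $\overline{\Gamma_\U}$ is automatically a $\T$-stable closed normal subgroup of $\U$, but without the discreteness hypothesis it could be proper, since in principle $\Gamma$ might project isomorphically onto a transcendental Zariski dense subgroup of $\T$ while contributing only a trivial unipotent part. The discreteness assumption on $\Gamma \cap \T(\R)$ must therefore be used in a genuinely analytic way --- plausibly via the logarithm on the identity component of $\T(\R)$ combined with the Lie group topology on $\G(\R)$ --- to extract enough unipotent elements of $\Gamma$ to force $\overline{\Gamma_\U} = \U$. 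This analytic/algebraic interplay is the technical heart of the argument; the subsequent spectral sequence comparison, while requiring care with cup products, is then largely formal.
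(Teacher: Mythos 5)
Your plan follows the same route as the paper's proof: decompose $\G = \U \rtimes \T$, compare the Hochschild--Serre spectral sequence of $0 \to \mathfrak{u}_\Q \to \mathfrak{g}_\Q \to \mathfrak{t}_\Q \to 0$ with the Lyndon--Hochschild--Serre spectral sequence of the corresponding extension of $\Gamma$, handle the unipotent layer via Lemma \ref{unipotent_2}, identify the torus layer with the cohomology of $\Z^{\dim_\Q(\T)}$, and invoke the comparison theorem. The differences are in the packaging: the paper constructs $\Phi_M$ not from the Chevalley--Eilenberg complex restricted along $\Gamma \hookrightarrow \G(\R)$ but as the cup-product composite of the rational restriction map $r_M \colon H^*(\G,M) \to H^*(\Gamma,M)$ with a Lambe--Priddy quasi-isomorphism for the torus factor (using $H^*(\mathfrak{g}_\Q,M) \cong H^*(\G,M) \otimes H^*(\mathfrak{t}_\Q,\Q)$), and it identifies the $E_2$-pages by the universal coefficient theorem rather than by your weight-space vanishing; these carry essentially the same content.

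The genuine problem is precisely the one you flag and leave open, namely steps (i) and (ii), and you should know that the paper offers no argument either: it simply writes ``hence, $\Gamma_\U$ is a Zariski dense subgroup of $\U$'' and silently identifies $\Gamma/\Gamma_\U$ with $\Gamma_\T = \Gamma \cap \T(\Q)$. Do not expect the discreteness hypothesis to yield these facts by an analytic argument, because it does not. Concretely, take $\G = \G_a^\Q \times \G_m^\Q$ and $\Gamma = \langle (1,2) \rangle$: the identity component of $\overline{\Gamma}$ surjects onto both factors and hence equals $\G$, so $\Gamma$ is Zariski dense; $\Gamma \cap \T(\Q)$ is trivial, so the discreteness hypothesis holds vacuously; yet $\Gamma_\U = \{0\}$ is not Zariski dense in $\U$, and indeed $H^*(\Gamma,\Q) = H^*(\Z,\Q)$ is not isomorphic to $H^*(\mathfrak{g}_\Q,\Q) = \bigwedge^*(\Q^2)$. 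Likewise, discreteness of $\Gamma \cap \T(\Q)$ says nothing a priori about $\Gamma/\Gamma_\U$, which is the group that actually enters your spectral sequence and whose rank need not equal $\dim_\Q(\T)$. So the ``technical heart'' you identify is not merely difficult; under the hypotheses as literally stated it fails, and what is really needed (and what holds in all of the paper's examples, e.g.\ $BS(1,n) = \Z[\frac{1}{n}] \rtimes \Z \leq \Q \rtimes \Q^*$) is the additional assumption that $\Gamma_\U$ is Zariski dense in $\U$ and that $\Gamma$ is, up to finite index, $\Gamma_\U \rtimes \Gamma_\T$ with $\Gamma_\T \cong \Z^{\dim_\Q(\T)}$ Zariski dense in $\T$. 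Granting that input, your comparison goes through in the same way as the paper's.
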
 

We begin the proof by constructing a ring map $\Phi_M: H^*(\mathfrak{g}_\Q, M) \to H^*(\Gamma, M)$. We conclude with the proof that the ring map we construct is an isomorphism.

Let $\G = \U \rtimes \T$ be an irreducible solvable $\Q$-defined linear algebraic group where $\U$ is the $\Q$-defined unipotent radical and $\T$ is a $\Q$-split maximal torus, and let $\mathfrak{g}_\Q = \mathfrak{u}_\Q \rtimes \mathfrak{t}_\Q$ be the associated $\Q$-defined Lie algebra of $\G$. Let $\Gamma \leq \G(\Q)$ be a Zariski dense subgroup such that $\Gamma_\T = \Gamma \cap \T(\Q)$ is discrete in the Euclidean topology. Therefore, by definition, $\textbf{T} \cong ( \mathbb{G}_m^{\Q})^{\dim_{\Q}(\textbf{T})}$ and its Lie algebra $\mathfrak{t}_\Q$ is abelian. Since $\Gamma$ is Zariski dense in $\G(\Q)$, it follows that $\Gamma_\T$ is Zariski dense in $\T(\Q)$. Since $\Gamma_T$ is discrete in $\T(\Q)$, we have $\Gamma_{\T} \cong \Z^{\dim_{\Q}(\textbf{T})}$.  Additionally, we let $\Gamma_\U = \Gamma \cap \U(\Q)$, and subsequently, $\Gamma_\U$ is a Zariski dense subgroup of $\U$. 

The rational Mal'tsev completion of $\Z^{\dim_{\Q}(\T)}$ is the unique unipotent $\Q$-algebraic group $\N$ containing $\Z^{\dim_{\Q}(\T)}$ as a discrete cocompact subgroup. \cite[Theorem 2.1]{raghunathan} implies that $\Z^{\dim_{\Q}(\T)}$ is Zariski dense in $\N$. Let $\mathfrak{n}_{\Q}$ be the Lie algebra of $\N$ and $\mathfrak{n}_\Q^*$ be the differential graded algebra which is defined to be the graded $\Q$-algebra equipped with a differential map satisfying $d^2 =0$ and the Leibniz rule (see \cite{Weibel}, p. 112). It follows that $\mathfrak{t}_\Q$ is isomorphic to $\mathfrak{n}_\Q$. By \cite{lambe_priddy}, there exists a quasi-isomorphism $\bigwedge \mathfrak{n}_\Q^* \to A^*(B\Z^{\dim_{\Q}(\textbf{T})})$ where $\bigwedge \mathfrak{n}_\Q^*$ is the cochain complex considered as the space of right invariant differential forms on $\N/\Z^{\dim_{\Q}(\textbf{T})}$ and $A^*(B\Z^{\dim_{\Q}(\textbf{T})})$ is the $\Q$-polynomial de Rham complex of the classifying space $B\Z^{\dim_{\Q}(\textbf{T})}$ viewed as a differential graded algebra via the wedge product (see \cite{Kasuya}). Hence, there exists an induced isomorphism $H^*(\mathfrak{n}_{\Q}, \Q) \to H^*(\Z^{\dim_{\Q}(\textbf{T})}, \Q)$ of cohomology rings.  Upon choosing an isomorphism of $\Q$-vector spaces $\mathfrak{n}_{\Q} \to \mathfrak{t}_\Q$ and an isomorphism of groups $\Z^{\dim_{\Q}(\textbf{T})} \to \Gamma_{\T}$, it follows that there exists an induced isomorphism $\varphi^* \colon H^*(\mathfrak{t}_\Q, \Q) \to H^*(\Gamma_{\T}, \Q)$ of cohomology rings. Letting $\pi_\T: \G \to \T$ be the natural projection, composition with the pullback homomorphism $\pi_\T^*$ gives a ring map $\pi_\T^* \circ \varphi^*: H^*(\mathfrak{t}_\Q, \Q) \to H^*(\Gamma, \Q)$. 

Given a finite dimensional rational $\G$-module $M$, we recall that the restriction map $r_M \colon H^*(\G, M) \longrightarrow H^*(\Gamma, M)$ preserves the pairing $M \times \Q \overset{\cup}{\longrightarrow} M$ where $(v, \alpha) \to \alpha  v$ and hence is a ring map. It follows that there exists a ring map 

\begin{center}
$r_M\otimes(\pi_{\T}^* \circ \varphi^*): {H^*(\G, M) \otimes H^*(\mathfrak{t}_\Q, \Q}) \longrightarrow {H^*(\Gamma, M) \otimes H^*(\Gamma, \Q) }$. 
\end{center}

Given the cup product $H^*(\Gamma, M) \otimes H^*(\Gamma, \Q) \overset{\cup}{\longrightarrow} H^*(\Gamma, M)$, we then consider the composition of the above maps which we denote by $\Phi_M$:

\begin{equation}{\label{ringmap}}
\begin{tikzcd}
	{H^*(\G, M) \otimes H^*(\mathfrak{t}_\Q, \Q}) && {H^*(\Gamma, M) \otimes H^*(\Gamma, \Q) } && {H^*(\Gamma, M).}
	\arrow["{r_M\otimes(\pi_{\T}^* \circ \varphi^*)}", from=1-1, to=1-3]
	\arrow["{\Phi_M}"', curve={height=30pt}, from=1-1, to=1-5]
	\arrow["\cup", from=1-3, to=1-5]
\end{tikzcd}
\end{equation}

By \cite[Theorem 5.2]{hochschild}, there exists an isomorphism $\Lambda_M \colon H^*(\mathfrak{u}_\Q, M)^{\mathfrak{t}_\Q} \to H^*(\G, M)$ that is compatible with cup products such that the following diagram commutes:

\begin{equation}\label{D2}
\begin{tikzcd}
	{H^*(\mathfrak{u}_\Q, M)^{\mathfrak{t}_\Q}\otimes H^*(\mathfrak{u}_\Q,\Q)^{\mathfrak{t}_\Q}} && {H^*(\mathfrak{u}_\Q,M)^{\mathfrak{t}_\Q}} \\
	\\
	{H^*(\G,M) \otimes H^*(\G,\Q)} && {H^*(\G,M)}
	\arrow["\cup", from=1-1, to=1-3]
	\arrow["{\Lambda_M \otimes \Lambda_\Q}", "\cong"', from=1-1, to=3-1]
	\arrow["{\Lambda_M}", "\cong"', from=1-3, to=3-3]
	\arrow["\cup", from=3-1, to=3-3]
\end{tikzcd}.
\end{equation}

We note that $H^*(\mathfrak{u}_\Q, M)^{\mathfrak{g_{\Q}}} \cong H^*(\mathfrak{u}_\Q, M)^{\mathfrak{t}_\Q}$ since $H^*(\mathfrak{u}_\Q, M)^{\mathfrak{u}_\Q} \cong H^*(\mathfrak{u}_\Q, M)$. Therefore, by Theorem \ref{T31} and the irreducibility condition on $\G$, we have the following theorem.

\begin{thm}{\label{T52}}
Let $\G = \U \rtimes \T$ be an irreducible solvable $\Q$-defined linear algebraic group, and let $\mathfrak{g}_\Q = \mathfrak{u}_\Q \rtimes \mathfrak{t}_\Q$ be the associated $\Q$-defined Lie algebra of $\G$. Then for every finite dimensional rational $\G$-module $M$ and for all $n \geq 0$, the following isomorphism holds: 

\begin{center}
    $H^n(\mathfrak{g}_\Q, M) \cong \displaystyle\bigoplus_{i+j=n} H^i(\G, M) \otimes H^j(\mathfrak{t}_\Q, \Q)$.
\end{center}
\end{thm}

\begin{rem}
    By \cite[Théorème 2]{dixmier1955cohomologie}, we have $H^j(\mathfrak{t}_\Q, \Q) \neq 0$ for all $0 \leq j \leq \dim_\Q (\mathfrak{t}_\Q)$. In particular, this means that the cohomology space $H^n(\mathfrak{g}_\Q, M)$ always depends on all of the cohomology groups $H^i(\G, M)$ for $0 \leq i \leq n$.
\end{rem}

It then follows from Theorem \ref{T52} that there exists a cohomology map

\[ \begin{tikzcd}
H^*(\mathfrak{g}_\Q, M) \ar[r, "\cong"] & H^*(\G,M) \otimes H^*(\mathfrak{t}_\Q, \Q) \ar[r, "\Phi_M"] & H^*(\Gamma, M).
\end{tikzcd} \]

We now turn to the following proposition to show that this map is a map of cohomology rings. 

\begin{prop}
     Let $\G$ be an irreducible solvable $\Q$-defined linear algebraic group  with associated $\Q$-defined Lie algebra $\mathfrak{g}_{\Q}$, and let $\Gamma \leq \G(\Q)$ be a Zariski dense subgroup such that it intersects the $\Q$-split maximal torus discretely in the Euclidean topology. Then for every finite dimensional rational $\G$-module $M$, there exists a cohomology map 
    
    \begin{center}
    $\Phi_M: H^*(\mathfrak{g}_\Q, M) {\longrightarrow} H^*(\Gamma, M)$
    \end{center}
    that preserves cup products. In particular, the following diagram commutes:
   \begin{equation}\label{D3}
    \begin{tikzcd}
	{H^*(\mathfrak{g}_\Q,M) \otimes H^*(\mathfrak{g}_\Q,\Q)} && {H^*(\mathfrak{g}_\Q,M)} \\
	\\
	{H^*(\Gamma,M) \otimes H^*(\Gamma, \Q)} && {H^*(\Gamma,M)}
	\arrow["\cup", from=1-1, to=1-3]
	\arrow["{\Phi_M \otimes \Phi_{\Q}}", from=1-1, to=3-1]
	\arrow["{\Phi_M}", from=1-3, to=3-3]
	\arrow["\cup", from=3-1, to=3-3]
\end{tikzcd}
\end{equation}
where the pairing $M \times \Q {\longrightarrow} M$ is scalar multiplication and $\Phi_\Q: H^*(\mathfrak{g}_\Q, \Q) \xrightarrow{} H^*(\Gamma, \Q)$ is a ring map with trivial $\Q$-coefficients.
\end{prop}

\begin{proof}
    It suffices to check the commutativity of the above diagram. Let $x \otimes y \in H^i(\G,M) \otimes H^j(\mathfrak{t}_\Q, \Q)$ and $a \otimes b \in H^\ell(\G,\Q) \otimes H^k(\mathfrak{t}_\Q, \Q)$. We have the following:
    \begin{eqnarray*}
       \Phi_M(x \otimes y) \otimes \Phi_\Q(a \otimes b) &=& (r_M(x) \cup (\pi_\T^* \circ \phi^*)(y) )\otimes (r_{\Q}(a) \cup (\pi_\T^* \circ \phi^*)(b) )\\
       &=& (-1)^{j \ell} r_M(x) \cup r_{\Q}(a) \cup (\pi_\T^* \circ \phi^*)(y)  \cup (\pi_\T^* \circ \phi^*)(b)\\
       &=& (-1)^{j \ell} (r_M(x) \cup r_{\Q}(a)) \cup (\pi_\T^* \circ \phi^*)(y\cup b)\\
       &=& \Phi_M((-1)^{j \ell}(x \cup a) \otimes (y \cup b))\\
       &=& \Phi_M((x \otimes y) \cup (a \otimes b)). 
    \end{eqnarray*}
\end{proof}

\subsection{Proof of Theorem \ref{main_thm}}
Our goal is to show that the ring map 
$$\Phi_M: H^*(\mathfrak{g}_\Q, M) \to H^*(\Gamma, M)$$ 
that we constructed in the previous subsection is an isomorphism of cohomology rings. From Section \ref{Section: Spectral sequences}, we have corresponding spectral sequences $\{E_r^{pq}\}$ and $\{{E'_r}^{pq}\}$ converging to $H^*(\mathfrak{g}_\Q, M)$ and $H^*(\Gamma, M)$, respectively. If $\Phi_M$ is compatible with a map $f: E \to E'$ between spectral sequences, we refer to the following general theorem which gives conditions on $f$ for which $\Phi_M$ will be an isomorphism.  

\begin{thm}{(Comparison Theorem, \cite{Weibel}, p. 126)}{\label{compthm}} Let $\{E_r^{pq}\}$ and $\{{E'_r}^{pq}\}$ be spectral sequences that converge to $H^*$ and $H'^{*}$, respectively. Let $h: H^* \to H'^*$ be a map compatible with a morphism $f: E \to E'$ of spectral sequences. Then, if $f_r: E_r^{pq} \to {E'_r}^{pq}$ is an isomorphism for all $p,q$ and for some $r$ (hence for $r = \infty$ by the Mapping Lemma), then $h: H^* \to H'^*$ is an isomorphism.
\end{thm}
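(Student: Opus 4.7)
The plan is to prove the Comparison Theorem by combining the Mapping Lemma with an inductive application of the Five Lemma along the finite filtrations on $H^n$ and ${H'}^n$.

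First, I would invoke the Mapping Lemma: since $f_r^{pq}\colon E_r^{pq} \to {E'_r}^{pq}$ is an isomorphism for all $p,q$ for some fixed $r$, the same holds for every $s \geq r$, and in particular $f_\infty^{pq}\colon E_\infty^{pq} \to {E'_\infty}^{pq}$ is an isomorphism for all $p,q$. Next, since both spectral sequences are assumed to converge, the filtrations on $H^n$ and ${H'}^n$ are bounded: for each fixed $n$ there exist integers $s < t$ (which we may take to be common to both filtrations after relabeling) such that
\begin{equation*}
0 = F^{s}H^n \subseteq \cdots \subseteq F^{t}H^n = H^n, \qquad 0 = F^{s}{H'}^n \subseteq \cdots \subseteq F^{t}{H'}^n = {H'}^n.
\end{equation*}
The compatibility of $h$ with $f$ means that $h(F^pH^n) \subseteq F^p{H'}^n$ for every $p$, so $h$ restricts to maps $h_p\colon F^pH^n \to F^p{H'}^n$, and the induced map on the associated graded pieces $F^pH^n/F^{p+1}H^n \to F^p{H'}^n/F^{p+1}{H'}^n$ corresponds to $f_\infty^{p,n-p}$ under the convergence isomorphisms $\beta$ and $\beta'$.

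The core of the argument is a downward induction on $p$, showing that $h_p$ is an isomorphism for every $p$. The base case $p = s$ is immediate, as both groups are zero. For the inductive step, assuming $h_{p+1}$ is an isomorphism, consider the commutative diagram with exact rows
\begin{equation*}
\begin{tikzcd}
0 \ar[r] & F^{p+1}H^n \ar[r] \ar[d, "h_{p+1}"'] & F^pH^n \ar[r] \ar[d, "h_p"] & F^pH^n/F^{p+1}H^n \ar[r] \ar[d, "\overline{h}_p"] & 0 \\
0 \ar[r] & F^{p+1}{H'}^n \ar[r] & F^p{H'}^n \ar[r] & F^p{H'}^n/F^{p+1}{H'}^n \ar[r] & 0.
\end{tikzcd}
\end{equation*}
The left vertical map is an isomorphism by induction, and the right vertical map $\overline{h}_p$ is an isomorphism because it corresponds via $\beta$ and $\beta'$ to $f_\infty^{p,n-p}$, already known to be an isomorphism. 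The Five Lemma (applied to the evident four-term version, padding with zeros) then forces $h_p$ to be an isomorphism. Descending until $p = s'$ with $F^{s'}H^n = H^n$ shows that $h\colon H^n \to {H'}^n$ is an isomorphism for every $n$, which is the desired conclusion.

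The main obstacle is not conceptual but bookkeeping: one must carefully check that the compatibility hypothesis genuinely identifies the induced map on associated graded pieces with $f_\infty^{p,n-p}$, so that the inductive step at the quotient level is valid. Everything else is a routine termination argument using boundedness of the filtration and the Five Lemma.
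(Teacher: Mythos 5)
Your argument is correct: the paper does not prove this statement itself but cites it as a standard result from Weibel (p.~126), and your proof---pass to $E_\infty$ via the Mapping Lemma, use boundedness of the filtrations, and run a downward induction with the short Five Lemma on the exact sequences $0 \to F^{p+1}H^n \to F^pH^n \to F^pH^n/F^{p+1}H^n \to 0$, identifying the graded map with $f_\infty^{p,n-p}$ through the compatibility hypothesis---is exactly the standard argument given there. No gaps; the one point you flag (that compatibility really does identify $\overline{h}_p$ with $f_\infty^{p,n-p}$) is supplied directly by the paper's definition of compatibility via $\beta$ and $\beta'$.
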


We can now prove Theorem \ref{main_thm}.

\begin{proof}
The short exact sequence of Lie algebras 
$$
1 \to \mathfrak{u}_\Q \to \mathfrak{g}_\Q \to \mathfrak{t}_\Q \to 1
$$
gives rise to the Hochschild-Serre spectral sequence: 
    
 \begin{center}
 $E_2^{pq} = H^p(\mathfrak{t}_\Q, H^q(\mathfrak{u}_\Q, M)) \Rightarrow H^{p+q}(\mathfrak{g}_\Q, M)$.
 \end{center}

Similarly, the group extension 
$$
1 \to \Gamma_\U \to \Gamma \to \Gamma_\T \to 1
$$
gives rise to the Lyndon-Hochschild-Serre spectral sequence: 

 \begin{center}
 ${E'_2}^{pq} = H^p(\Gamma_\T, H^q(\Gamma_\U, M)) \Rightarrow H^{p+q}(\Gamma, M)$.
 \end{center}

We will write $N =  H^q(\U, M)$, $n_\T = {\dim_{\Q}(\T)}$ and $n = \dim_\Q (N)$. Since the group $\T$ is semisimple, the $\T(\Q)$-action on $N$ can be diagonalized and expressed for $(x_1, \ldots, x_{n_\T}) \in  \T(\Q)$ as multiplication by the matrix
\begin{equation*}
    \begin{pmatrix}
        \beta_1^{\sum_{j = 1}^{n_\T} \gamma_{1j}x_j} & & \\
         & \ddots &  \\
         & &  \beta_n^{\sum_{j = 1}^{n_\T} \gamma_{n_\T j}x_j}\\
    \end{pmatrix}
\end{equation*}
where $\beta_{i} \in \Q_{>0}$ and $\gamma_{ij} \in \Q$ for every $(i, j) \in \{1 ,\ldots, n \} \times \{1, \ldots, n_\T \}$. With this expression we see that the subspace of invariant vectors $N^\T$ is spanned by the basis vectors $e_j$ for $j$ such that 
$$ \beta_n^{\sum_{j = 1}^{n_\T} \gamma_{n_\T j}x_j} =1.$$ 
Hence, we can split off $N^\T$ and obtain a $\T$-module $N'$ such that $N = N^\T \oplus N'$ and where $(N')^\T = \{0 \}$. 
By \cite[Theorem 5.1]{hochschild}, the exponential map $\exp: \mathfrak{u}_\Q \to \U$ induces an isomorphism of $\Q$-vector spaces $H^q(\mathfrak{u}_\Q, M) \cong H^q(\U, M)$.
The $\T$-action on the $\T$-module $H^q(\U, M)$ is induced by conjugation, so its differential gives a $\mathfrak{t}_\Q$-action on the same space. The $\mathfrak{t}_\Q$-action on $H^q(\mathfrak{u}_\Q, M)$ appearing in the Hochschild-Serre spectral sequence is induced by the adjoint action of $\mathfrak{t}_\Q$ on $\mathfrak{u}_\Q$.
Since $\exp \circ \mathrm{ad} = \mathrm{Ad} \circ \exp$, the exponential map induces an isomoprhism $H^q(\mathfrak{u}_\Q, M) \cong H^q(\U, M)$ of $\mathfrak{t}_\Q$-modules, and hence, it preserves a decomposition of the same type $$H^q(\mathfrak{u}_\Q, M) = H^q(\mathfrak{u}_\Q, M)^{\mathfrak{t}_\Q} \oplus N''. $$
Since $\T$ is irreducible, we have $(N'')^{\mathfrak{t}_\Q} = \{ 0 \}$ and $H^q(\mathfrak{u}_\Q, M)^{\mathfrak{t}_\Q} \cong H^q(\U, M)^{\T}$.
Hence, by Theorem \ref{dixmier unipotent Lie algebra} and additivity of cohomology in coefficients, we have
\begin{center}
 $E_2^{pq} \cong H^p(\mathfrak{t}_\Q, H^q(\mathfrak{u}_\Q, M)^{\mathfrak{t}_\Q})\oplus H^p(\mathfrak{t}_\Q,N'') = H^p(\mathfrak{t}_\Q, H^q(\mathfrak{u}_\Q, M)^{\mathfrak{t}_\Q})$.
\end{center}
Similarly, the $\T$-action on the $\T$-module $H^q(\U, M)$ restricts to a $\Gamma_\T$-action on the same space. Since the restriction map $r : H^q(\U, M)\cong H^q(\Gamma_\U, M)$ is an isomorphism by Proposition \ref{unipotent_2} (using that $\Gamma_\U$ is Zariski dense in $\U$), and since this map commutes with restriction of the $\T(\Q)$-action to a $\Gamma_\T$-action, it follows that $H^q(\U, M)\cong H^q(\Gamma_\U, M)$ as $\Gamma_\T$-modules. This means that $H^q(\Gamma_\U, M)$ preserves a decomposition of the same type as $H^q(\U, M)$, that is, $$H^q(\Gamma_\U, M) = H^q(\Gamma_\U, M)^{\Gamma_\T} \oplus N'''. $$
Since $\Gamma_\T$ is Zariski dense in $\T$, we have $H^q(\U, M)^{\T} \cong H^q(\Gamma_\U, M)^{\Gamma_\T}$ and also $(N''')^{\Gamma_\T} = \{ 0 \}$. Hence, using Proposition \ref{prop: nilpotent wiouth invariant vectors cohomology} we obtain
  \begin{center}
 ${E'_2}^{pq} \cong H^p(\Gamma_\T, H^q(\Gamma_\U, M)^{\Gamma_\T}) \oplus H^p(\Gamma_\T, N''') = H^p(\Gamma_\T, H^q(\Gamma_\U, M)^{\Gamma_\T})$.
 \end{center}

As we already noticed, combining, \cite[Theorem 5.1]{hochschild} and Proposition \ref{unipotent_2}, we have that $H^q(\mathfrak{u}_\Q, M) \cong H^q(\U, M) \cong H^q(\Gamma_\U, M)$ and that $$H^q(\mathfrak{u}_\Q, M)^{\mathfrak{t}_\Q} \cong H^q(\U, M)^{\T} \cong H^q(\Gamma_\U, M)^{\Gamma_\T}$$ as rational $\T$-modules (which is just a $\Q$-vector space isomorphism, since all actions are trivial).

On the other hand, \cite[Corollary 4.2]{lambe_priddy} says that $H^p(\mathfrak{t}_\Q, \Q) \cong H^p(\Gamma_\T, \Q)$, which by the universal coefficient theorem implies $H^p(\mathfrak{t}_\Q, M') \cong H^p(\Gamma_\T,M')$ for any trivial finite dimensional rational $\T$-module $M'$. By plugging $M' = H^q(\mathfrak{u}_\Q, M)^{\mathfrak{t}_\Q} \cong H^q(\Gamma_\U, M)^{\Gamma_\T}$ in this isomorphism, we obtain isomorphisms $f_2^{pq}: E_2^{pq} \to {E'_2}^{pq}$.

Following each of the isomorphisms shows that the maps $f_2^{pq}$ are in fact induced by maps 

\begin{center}
    $f_0^{pq}: E_0^{pq} =  C^p(\mathfrak{t}_\Q, C^q(\mathfrak{u}_\Q, M)) \to C^p(\Gamma_\T, C^q(\Gamma_\U, M)) = {E_0'}^{pq}$,
\end{center}
which in turn induce a map of spectral sequences $f_r^{pq}: E_r^{pq} \to {E_r'}^{pq}$ (which are all isomorphisms for $r \geq 2$). Recall that $E_r^{pq}$ and ${E'_r}^{pq}$ are first quadrant spectral sequences and thus, the filtrations on the cochain complexes are canonically bounded.

The maps defined on the total cochain complex as 
\begin{equation*}
    \bigoplus_{n= p + q} f_0^{pq}: \bigoplus_{n = p +q}  C^p(\mathfrak{t}_\Q, C^q(\mathfrak{u}_\Q, M)) \to \bigoplus_{n = p +q}  C^p(\Gamma_\T, C^q(\Gamma_\U, M)).
\end{equation*}
induce the ring map $\Phi_M: H^*(\mathfrak{g}_\Q, M) \to H^*(\Gamma, M)$  (\ref{ringmap}) when passing to cohomology.
Hence, by the classical convergence theorem (\ref{convthm}), the ring map $\Phi_M$ is compatible with the corresponding map $f: E \to E'$ of spectral sequences. This gives the following commutative diagram:

\begin{equation}{\label{D51}}
\begin{tikzcd}
E_2^{pq} \arrow[rr, "f_2^{pq}"] \arrow[dd, Rightarrow] && {E'_2}^{pq} \arrow[dd, Rightarrow]  \\ \\
H^*(\mathfrak{g}_\Q, M) \arrow[rr, "\Phi_M"] && H^*(\Gamma, M) 
\end{tikzcd}.
\end{equation}

We note that the cup product $H^*(\mathfrak{g}_\Q, M) \otimes H^*(\mathfrak{g}_\Q, \Q) \overset{\cup}\longrightarrow H^*(\mathfrak{g}_\Q,\Q)$ gives rise to an associated multiplicative structure $E_2^{pq} \times E_2^{p'q'} \to E_2^{p+p',q+q'}$ on $E_2^{pq}$ which is given by

\begin{eqnarray*}
H^p(\mathfrak{t}_\Q, H^q(\mathfrak{u}_\Q, M)) \otimes H^{p'}(\mathfrak{t}_\Q, H^{q'}(\mathfrak{u}_\Q, M)) &\overset{\cup}{\longrightarrow}&  H^{p+p'}(\mathfrak{t}_\Q, H^q(\mathfrak{u}_\Q, M) \otimes H^{q'}(\mathfrak{u}_\Q, M))\\ &\overset{\cup}{\longrightarrow}& H^{p+p'}(\mathfrak{t}_\Q, H^{q+q'}(\mathfrak{u}_\Q, M)).
\end{eqnarray*}

The cup product $H^*(\Gamma, M) \otimes H^*(\Gamma, \Q) \overset{\cup}\to H^*(\Gamma,\Q)$ gives rise to an analogous multiplicative structure on ${E'_2}^{pq}$. Therefore, the commutative diagram (\ref{D51}) is compatible with cup products.
 
By the comparison theorem (\ref{compthm}), it follows that $\Phi_M: H^*(\mathfrak{g}_\Q, M) \to H^*(\Gamma, M)$ is an isomorphism of cohomology rings.
\end{proof}

\bibliographystyle{plain}
\bibliography{ref}
\end{document}